\newcolumntype{Y}{>{\centering\arraybackslash}X}
\newcolumntype{Z}{>{\scriptsize}Y}
\pgfplotsset{compat=1.17}
\newtheorem{theorem}{Theorem}
\newtheorem{lemma}[theorem]{Lemma}
\newtheorem{proposition}[theorem]{Proposition}
\newtheorem{corollary}[theorem]{Corollary}
\theoremstyle{definition}
\newtheorem{definition}[theorem]{Definition}
\theoremstyle{remark}
\newtheorem{example}[theorem]{Example}
\DeclareMathOperator{\image}{Im}
\DeclareMathOperator{\wt}{wt}
\DeclareMathOperator{\trace}{Tr}
\newcommand{\st}{\; | \;} 
\newcommand{\card}[1]{\left\vert \, {#1} \, \right\vert} 
\newcommand{\floor}[1]{\left\lfloor{#1}\right\rfloor} 
\newcommand{\set}[1]{\left\lbrace{#1}\right\rbrace} 
\newcommand{\mini}[1]{\min\left\lbrace #1 \right\rbrace} 
\newcommand{\field}{\mathbb{F}}
\newcommand{\Fpr}{\mathbb{F}_{p^r}}
\newcommand{\Fprn}{\mathbb{F}_{p^r}^n}
\newcommand{\Fqn}{\mathbb{F}_q^n}
\newcommand{\Fp}{\mathbb{F}_p}
\newcommand{\ZZ}{\mathbb{Z}}
\newcommand{\QQ}{\mathbb{Q}}
\newcommand{\ring}{\mathcal{R}}
\newcommand{\Zmod}[1]{\ZZ/{#1}\ZZ}
\newcommand{\units}[1]{(\Zmod{#1})^\times}
\newcommand{\Zps}{\Zmod{p^s}}
\newcommand{\Zpsn}{(\Zps)^n}
\newcommand{\ideal}[1]{\left \langle{#1} \right \rangle}
\newcommand{\code}{\mathcal{C}}
\newcommand{\dualcode}{\code^\perp}
\DeclareMathOperator{\dist}{d}
\newcommand{\weight}{\mathrm{wt}}
\newcommand{\maxWT}{M_{\weight}}
\newcommand{\LW}{\wt_{\mathsf{L}}}
\newcommand{\LD}{\dist_{\mathsf{L}}}
\newcommand{\HW}{\wt_{\mathsf{H}}}
\newcommand{\HomW}{\wt_{\mathsf{hom}}}
\newcommand{\SubW}{\wt_{\lambda}}
\newcommand{\kr}{Krawtchouk }
\newcommand{\partition}{\mathcal{P}}
\newcommand{\decompWq}{\mathbb{D}^{\wt}_{n}}
\newcommand{\enumW}[1]{\mathcal{D}_{#1}^{\wt}}
\newcommand{\enumL}[1]{\mathcal{D}_{#1}^{\mathsf{L}}}
\newcommand{\krawL}{K^{\mathsf{L}}}
\newcommand{\compL}{\mathrm{Comp}^{\mathsf{L}}_{\pi}(\rho)}
\newcommand{\piL}{\pi^{\mathsf{L}}}
\newcommand{\PL}{P^{\mathsf{L}}}
\newcommand{\decompLq}{\mathbb{D}^{\mathsf{L}}_{n}}
\newcommand{\enumH}[1]{\mathcal{D}_{#1}^{\mathsf{hom}}}
\newcommand{\krawH}{K^{\mathsf{hom}}}
\newcommand{\compH}{\mathrm{Comp}^{\mathsf{hom}}_{\pi}(\rho)}
\newcommand{\PHsym}{\partition_{\mathrm{sym},n}^{\mathsf{hom}}}
\newcommand{\PH}{\partition^{\mathsf{hom}}}
\newcommand{\blockH}{P^{\mathsf{hom}}}
\newcommand{\decompHq}{\mathbb{D}^{\mathsf{hom}}_{n}}
\newcommand{\ZUSRset}{\set{Z,U,S,R}}
\newcommand{\enumS}[1]{\mathcal{D}_{#1}^{\lambda}}
\newcommand{\krawS}{K^{\lambda}}
\newcommand{\compS}{\mathrm{Comp}^{\lambda}_{\pi}(\rho)}
\newcommand{\PS}{\partition^{\lambda}}
\newcommand{\PSsym}{\partition_{\mathrm{sym},n}^{\lambda}}
\newcommand{\blockS}{P^{\lambda}}
\newcommand{\decompSq}{\mathbb{D}^{\lambda}_{n}}
\newcommand{\IS}{I^{\lambda}}
\newcommand{\wwe}[1]{\mathsf{wwe}_{#1}}
\newcommand{\Lwe}[1]{\mathsf{Lwe}_{#1}}
\newcommand{\Homwe}[1]{\mathsf{Homwe}_{#1}}
\newcommand{\Swe}[1]{\lambda\mathsf{we}_{#1}}
\newcommand{\SDe}[1]{\lambda\mathsf{dec}_{#1}}
\newcommand{\wtdec}[1]{\pi^{(#1)}}
\definecolor{dark_red}{RGB}{150,0,0}
\definecolor{dark_green}{RGB}{0,150,0}
\definecolor{dark_blue}{RGB}{0,0,150}
\definecolor{dark_pink}{RGB}{80,120,90}
\definecolor{myteal}{RGB}{0,128,106}
\definecolor{myblue}{RGB}{69, 220, 180}
\definecolor{myred}{RGB}{222, 0, 49}
\definecolor{myyellow}{RGB}{255,170,0}
\colorlet{teall}{teal!75!yellow}
\colorlet{review}{blue!80!cyan}
\title[]{Coarsest Fourier-reflexive Partitions for the Lee, Homogeneous and Subfield Metric}
\begin{document}

\author[J. Bariffi]{Jessica Bariffi$^{1}$}
\author[G. Cavicchioni]{Giulia Cavicchioni$^{2}$}
\author[V. Weger]{Violetta Weger$^1$}

\address{$^1$Department of Electrical and Computer Engineering\\
        Technical University of Munich\\
        Germany 
}
\email{jessica.bariffi@tum.de}

\address{$^2$Institute of Communication and Navigation \\ 
        German Aerospace Center \\ 
        Germany
	}
\email{giulia.cavicchioni@unitn.it}

\email{violetta.weger@tum.de}

\begin{abstract}
MacWilliams identities relate the weight enumerators of a code with those of its dual and are classically formulated with respect to the Hamming weight. For other metrics, however, these identities often fail when considering the  weight partition of the ambient space. It is known that MacWilliams identities hold for  enumerators associated with Fourier-reflexive partitions, and that orbits of subgroups of the linear isometry group always yield such partitions. 
This raises the question whether, for metrics beyond the Hamming metric, there exist meaningful partitions that lie strictly between the weight partition and the orbit partition: finer than the latter, yet still coarse enough to retain useful MacWilliams-type identities.
In this work, we study this question for finite chain rings endowed with additive metrics. For the Lee metric, we show that the partition induced by the action of the full group of linear isometries is already the coarsest Fourier-reflexive partition refining the weight partition. 
In particular, no intermediate partition exists that is both finer than the Lee weight partition and Fourier-reflexive. 
We refer to this partition as the Lee partition and show that it allows the recovery of all additive weight enumerators over the ring.
In contrast, for the homogeneous metric and for the subfield metric, we identify new, significantly coarser symmetrized partitions that remain Fourier-reflexive and still allow the recovery of the corresponding weight enumerators. We prove that these partitions are the coarsest such symmetrized partitions for which MacWilliams-type identities hold. As an application, we derive linear programming bounds based on the resulting MacWilliams identities.
\end{abstract}
\maketitle

\keywords{Lee weight,
    Homogeneous weight,
    Subfield weight,
    MacWilliams identities, 
    Codes over finite chain rings, 
    Linear programming bound}

\section{Introduction}\label{sec:intro}
The MacWilliams identities have been a major breakthrough in understanding the theory of linear codes and mark one of the most celebrated results. In a nutshell, the MacWilliams identities state that the weight enumerator of a code is related to, and completely determines, the weight enumerator of the dual code. 
The original result has been proven in \cite{macwilliams1963theorem} by Jessie MacWilliams and several generalizations and reformulations, through association schemes, generating functions or graphs, have appeared since \cite{delsarte1973algebraic,klemm1987identitat,klemm1989selbstduale,wood1999duality, honold2001macwilliams}. Other generalizations of the MacWilliams identities have been discussed in \cite{simonis1995macwilliams} and further extensions to different code families, like convolutional codes, quantum codes and others, followed \cite{byrne2007linear,gluesing2008macwilliams,gluesing2009macwilliams,bocharova2010weight,yildiz2014linear}. 
The MacWilliams identities also allow to state a Linear Programming (LP) bound, giving the tightest known upper bound on the size of a code with prescribed minimum distance and length. For an overview we refer the interested reader to \cite{delsarte1998association}.

This powerful original result restricts itself to classical codes over finite fields endowed with the Hamming metric. However, as soon as we change the metric, the situation becomes more complex. While the MacWilliams identities are still valid for codes over $\Zmod{4}$ equipped with the Lee metric \cite{hammons1994z}, in 2015, Shi et al. \cite{shi2015note} and \cite{wood2015some} proved their failure over $\Zmod{8}$. Moreover, Abdelghany and Wood  \cite{abdelghany2020failure} showed the nonexistence of any version of the MacWilliams identities for Lee weight enumerators over $\Zmod{m}$ for any positive integer $m \geq 5$. 
A similar result holds for the homogeneous weight as well: apart from the trivial cases $\Zmod{p}$, for a prime $p$, where the homogeneous metric coincides with the Hamming metric, the homogeneous MacWilliams identities fail \cite{gluesing2015fourier,wood2023homogeneous}. In \cite{wood2023weights,wood2026weights}, the rarity of weights that satisfy the MacWilliams identities is observed.
Changing the partition of a code and, more precisely, moving away from enumerating codewords of the same weight, has already been suggested by MacWilliams herself for Lee-metric codes over finite fields \cite{macwilliams1972macwilliams}. Similarly, this idea has been supported for Lee-metric codes using association schemes in \cite{astola1982leescheme,patrick1986lee} where the existence of MacWilliams-type identities have been shown. 
The study of partitioning the ambient space according to a pre-defined property dates back to Zinoviev and Ericson \cite{zinoviev1996fourier} in 1996. They studied additive codes and analyzed for which enumerators of the code and its dual  MacWilliams-type identities are fulfilled, which laid the foundation of \textit{Fourier-reflexive} partitions. In \cite{gluesing2015fourier}, Gluessing-Luerssen, studied partitions of additive codes over finite Frobenius rings and showed that MacWilliams identities exist if the partition of the underlying ambient space is Fourier-reflexive. In a follow-up work \cite{Heide}, she proved that partitioning a code in the homogeneous metric into the codewords of the same homogeneous weight  yields to a failure of the MacWilliams identities. Similarly, for codes in the sum-rank metric, \cite{byrne2021fundamental} showed the failure of the classical MacWilliams identities and the existence of  MacWilliams-type identities using a finer partition. 

In this paper, we focus on codes over a finite chain ring $\ring$ endowed with an additive weight (i.e., the weight of a codeword is given by the sum of the weights of its entries over $\ring$).
For additive weights on codes 
$\code \subseteq \ring^n$, it is natural to consider partitions of the ambient space 
$\ring$ that induce symmetrized partitions of 
$\ring^n$. MacWilliams-type identities for such symmetrized partition enumerators were studied in \cite{gluesing2015fourier}, where it was shown that they hold whenever the underlying partition of $\ring$ is Fourier-reflexive. Classical examples include the Hamming weight partition, the singleton partition, and partitions induced by the orbits of subgroups of the additive automorphism group.
These examples illustrate a fundamental trade-off. The Hamming weight partition is very coarse and satisfies MacWilliams identities, but it does not allow to recover other additive weight enumerators. At the opposite extreme, the singleton partition is Fourier-reflexive and allows the recovery of all additive weights, but it is maximally fine and becomes computationally infeasible as the alphabet size grows. Orbit partitions arising from isometry groups lie between these two extremes and are guaranteed to be Fourier-reflexive, but they are often still unnecessarily fine.
The central goal of this work is therefore to identify, for a given additive metric, the coarsest symmetrized partition that is Fourier-reflexive and still determines the corresponding weight enumerator. We address this problem for finite chain rings endowed with the Lee metric, the homogeneous metric, and the subfield metric. In particular, we show that for the Lee metric no such intermediate partition exists: the orbit partition induced by the full isometry group is already the coarsest Fourier-reflexive refinement. In contrast, for the homogeneous and subfield metric we construct new, significantly coarser symmetrized partitions that retain MacWilliams-type identities while still allowing the recovery of the respective weight enumerators.
The paper is organized as follows. Section \ref{sec:preliminaries} will serve as introduction and recap of the main definitions and results needed throughout the paper. We start with the notion of a linear code over a finite chain ring endowed with an additive weight (where we specifically introduce the Hamming, Lee, homogeneous and subfield weight). We then recap the theory of group characters stating some of their main properties, such as the Schur orthogonality. The section is completed by introducing the MacWilliams identities in their original form together with the Krawtchouk coefficients. Instead of partitioning tuples by weight, in Section \ref{sec:new_macwilliam} we present novel Fourier-reflexive partitions with respect to the Lee, homogeneous and subfield weight.
As a first step we consider a finite chain ring endowed with any additive metric and partition the code into codewords of the same (Lee weight) decomposition which we refer to as the \textit{Lee partition} of the code. 
We show that the Lee decomposition enumerator of a code is fully determined by the Lee decomposition enumerator of its dual code, i.e.,  MacWilliams-type identities exist for any additive weight over any finite chain ring, and, in particular, for codes in the Lee metric. Even though the result holds for any additive metric over a finite chain ring, for some metrics there are coarser and more natural partitions to choose. Thus, in a next step we introduce partitions for the homogeneous weight and the subfield weight, respectively, that are more suited and coarser   and still allow for   MacWilliams-type identities. To complete the study, we derive linear programming bounds for the Lee, homogeneous and subfield metric in Section \ref{sec:lpbound}. Finally, concluding remarks are given in Section \ref{sec:conclusion}.
\section{Preliminaries}\label{sec:preliminaries}
In this section we introduce the main definitions and basic results needed throughout this paper. We denote by $p$ a prime number.
Let $\ring$ be a finite chain ring with maximal ideal generated by $\gamma$, let $s$ be its nilpotency index and $q=p^r$ be the size of the residue field, for a positive integer $r$. While the main result holds for any finite chain ring, we often focus on integer residue rings. 
We denote by $p$ a prime number and for a positive integer $s$ we let $\Zps$ denote the finite integer residue ring, and we denote by $\units{p^s}$ the set of units modulo $p^s$. 
Note that a finite integer residue ring $ \Zps$ is a finite chain ring with $q=p$, $\gamma=p, r=1$ and nilpotency index $s$. 
Given a positive integer $n$ and an $\ell$-tuple of nonnegative integers $k :=(k_1, \ldots, k_{\ell})$ satisfying that $\sum_{i=1}^\ell k_i= n$, we denote by
\begin{align}
    \binom{n}{k} := \binom{n}{k_1, \ldots, k_{\ell}} = \frac{n!}{k_1! \ldots k_{\ell}!}
\end{align}
the multinomial coefficient. Moreover, given a statement $A$, we denote by $\mathbbm{1}_A$ its indicator function, that is,
\begin{align}
    \mathbbm{1}_A = 
    \begin{cases}
        1 & A \text{ is true,}\\
        0 & \text{otherwise.}
    \end{cases}
\end{align}
Furthermore, for a positive integer $m$, we denote by $I_m$ the $m\times m$ identity matrix.

\subsection{Codes and Additive Weights over Finite Chain Rings}\label{subs:finitechain}
In this section we  introduce linear codes over finite chain rings and their main properties. In this paper, we exclusively focus on commutative chain rings in order not to distinguish between left and right ideals.

\begin{definition}
    Let $\ring$ be a finite chain ring. An \emph{$\ring$-linear code} $\code$ of length $n$ is an $\ring$-submodule of $\ring^n$. The free module $\ring^n$ is called the \emph{ambient space} of $\code$ and the elements of $\code$ are called \emph{codewords}.
\end{definition}

The fundamental theorem of finite abelian groups implies that any $\ring$-module (i.e., any $\ring$-linear code $\code$) is isomorphic to the following direct sum of $\ring$-modules
\begin{align}
    \code \cong (\ring / \gamma^s\ring)^{k_0} \times (\ring / \gamma^{s-1}\ring)^{k_1} \times \cdots \times (\ring /\gamma\ring)^{k_{s-1}} .
\end{align}
The unique $s$-tuple $(k_0, k_1, \ldots, k_{s-1})$ is called the \emph{subtype} of $\code$ and $k_0$ is called its \emph{free rank}.\medbreak

A code $\code \subseteq \ring^n$ can be represented by a \emph{generating set}, that is a subset of codewords that generates $\code$ as an $\ring$-submodule. We call a generating set a \emph{minimal generating set} if it is minimal with respect to inclusion. The cardinality of a minimal generating set of $\code$ is called the \emph{rank} $K$ and can be computed as $K = \sum_{i = 0}^{s-1}k_i$. 

We  endow the ambient space $\ring^n$ with the standard inner product, i.e., for $x, y \in \ring^n$ we set 
\begin{align}
    \ideal{x, y} := x y^\top = \sum_{i = 1}^n x_i y_i.
\end{align}
\begin{definition}
    Given a linear code $\code \subseteq \ring^n$, the \emph{dual} $\dualcode$ of $\code$ is defined as
    \begin{align}
        \dualcode := \set{x \in \ring^n \st \ideal{x, c} = 0 \text{ for every } c \in \code}.
    \end{align}
\end{definition}
In ring-linear coding theory, the notion of duality is in general not well-defined. However, in \cite{wood1999duality}, Wood proved that the dual of a code over a Frobenius ring, and hence over a finite chain ring, is well-defined. Thus, it holds that $(\code^\perp)^\perp=\code$.
It is straightforward to see that a parity-check matrix $H$ of $\code$ is a generator matrix of $\dualcode$. Moreover, if $\code $ is of rank $K$ and subtype $(k_0,\dots,k_{s-1})$ then $\dualcode$ has rank $n-k_0$ and subtype $(n-K,k_{s-1},\dots,k_1)$.\medskip

One of the main goals of coding theory is to correct errors that occur during transmission in a received message. A linear code is capable of correcting a certain amount of errors which are measured using a given metric induced by a weight function over the ring $\ring$.

\begin{definition}
    Let $\ring$ denote a finite chain ring. A \emph{weight} over $\ring$ is a function $\weight\colon  \ring \to \QQ $ satisfying 
    \begin{enumerate}
        \item  $\weight(0)=0 $ and $\weight (x)>0 $ for all $x\ne 0$;
        \item $\weight(x)=\weight(-x)$;
        \item  $\weight(x+y)\le \weight(x)+\weight(y)$.
    \end{enumerate}
\end{definition}
A weight over a finite chain ring $\ring$ naturally induces a \emph{distance} which is a function $ \mathrm d\colon \ring\times\ring \to \QQ $ defined by $\mathrm{d}(x,y):=\weight(x-y)$.
The coordinate-wise extensions of the weight and distance function will also be denoted by $\weight$ and $\mathrm{d}$ which we call \emph{(additive) weight} and \emph{(additive) distance}, respectively. In particular, given $x\in \ring^n$, we define
\begin{align}
    \weight(x):=\sum_{i=1}^n \weight(x_i).
\end{align}
It is easy to see that such an additive distance over $\ring$ satisfies the properties of a metric. We will now introduce the main additive weight functions which are of interest in the course of this paper. Each of the weights over $\ring$ induce a distance function as described above.
The most prominent weight in coding theory is the Hamming weight.
\begin{definition}\label{def:Hamming}
    Let $\ring$ be a finite chain ring. The \emph{Hamming weight} of an $n$-tuple $x \in \ring^n$ is given by the cardinality of the support of $x$, i.e.,
    \begin{align}
        \HW(x) := \card{\set{i \in \{ 1, \ldots , n\} \st x_i \neq 0}}.
    \end{align}
\end{definition}

We now focus on the case where $\ring = \Zps$ is a finite integer residue ring modulo $p^s$. Hence, we consider codes to be submodules of $(\ZZ/ p^s \ZZ)^n$. 
The Lee weight, introduced in 1958 by Lee \cite{lee1958some}, is a generalization of the Hamming weight over the binary.

\begin{definition}
    The \emph{Lee weight of an element} $a \in \Zps$ is given by
    \begin{align}
        \LW(a) := \mini{a, \card{p^s-a}}.
    \end{align}
    Similarly, we define the \emph{Lee weight of an $n$-tuple} $x \in \Zpsn$ additively by
    \begin{align}
        \LW(x) := \sum_{i = 1}^n \LW(x_i).
    \end{align}
\end{definition}
The following upper and lower bounds hold trivially  for any $a \in \Zps$ and $x \in \Zpsn$
\begin{align}
    0 \leq \LW(a) \leq \floor{p^s/2} \quad \text{and} \quad \HW(x)\leq \LW(x) \leq \floor{p^s/2}\HW(x).
\end{align}

In 1997, Constantinescu and Heise first introduced the homogeneous weight for integer residue rings \cite{constantinescu1997metric}, which was later extended  to finite rings in \cite{greferath2000finite} and \cite{nechaev1999weighted}. For this we consider again a finite chain ring $\ring$ with socle $\mathcal{S}.$ Recall that the socle $\mathcal{S}$ of a ring is defined as the sum of the minimal nonzero submodules. For example, for $\ring= \Zps$ we have $\mathcal{S}= \langle p^{s-1} \rangle$. More generally, if the maximal ideal of $\ring$ is $\langle \gamma \rangle$, we have $\mathcal{S} = \langle \gamma^{s-1} \rangle.$
The homogeneous weight extends the Hamming weight coinciding with it for finite fields and with the Lee weight for $\Zmod{4}$. In the following we define the homogeneous weight in its normalized form. 

\begin{definition}\label{def:hom_metric}
    Let $\ring$ be a finite chain ring with residue field of size $q$. The \emph{homogeneous weight} of $a \in \ring$ is defined as
    \begin{align}
        \HomW(a)
            =
        \begin{cases}
            0 & \text{if } a = 0,\\
            1 & \text{if } a \not\in \mathcal{S},\\
            \frac{q}{q-1} & \text{if } a \in \mathcal{S} \setminus \{0\}.
        \end{cases}
    \end{align}
    The  homogeneous weight extends additively to tuples $x \in \ring^n$, i.e., 
    \begin{align}
        \HomW(x)=\sum_{i=1}^n \HomW(x_i).
    \end{align} 
\end{definition}

Lastly, let us consider finite fields, more precisely finite extension fields $\field_{p^r}$. In \cite{grassl2022subfield}, the authors introduced the \emph{subfield metric} over  $\field_{p^r}$ for quantum error correcting codes. This metric is defined similarly to the homogeneous metric over finite chain rings. Namely, it gives weight one to nonzero elements in the underlying base field and all other nonzero elements in $\field_{p^r}$ are of weight $\lambda \geq 1$.
\begin{definition}\label{def:subfield_metric}
    Let $a \in \field_{p^r}$ and $\lambda \geq 1$. The $\lambda$-\emph{subfield weight} of $a$ is defined as
    \begin{align}
        \SubW(a)
            =
        \begin{cases}
            0 & \text{if } a = 0,\\
            1 & \text{if } a \in \Fp^{\times},\\
            \lambda & \text{if } a \in \field_{p^r}\setminus \Fp.
        \end{cases}
    \end{align}
    The $\lambda$-subfield weight extends additively to vectors $x \in \field_{p^r}^n$, i.e., 
    \begin{align}
        \SubW(x) = \sum_{i=1}^n \SubW(x_i).
    \end{align}
\end{definition}

\subsection{Group Characters}
We now recall the required basics on group characters. In a nutshell, a group character defines a representation of a group $G$ in terms of a complex function. Working over cyclic groups, characters are used in combination with the discrete Fourier transform. Throughout this section, let $\mathbb C^*\coloneqq \set{z \in \mathbb{C} \st \card{z} = 1}$. 

\begin{definition}\label{def:character}
    Let $G$ be a  finite abelian group. A \emph{character} $\chi$ of $G$ is a complex-valued map $\chi: G \longrightarrow \mathbb C^*$, such that for every $\alpha, \beta \in G$ it holds
    \begin{align}
        \chi(\alpha + \beta) = \chi(\alpha)\chi(\beta).
    \end{align}
\end{definition}
We denote by \( \widehat{G} \) the set of all characters of \( G \), where  
$\widehat{G} = \operatorname{Hom}_{\mathbb{Z}}(G, \mathbb{C}^*)$  
forms a group along with the addition
\begin{align}
    (\chi_i+ \chi_j)(g) = \chi_i(g) \chi_j(g), \quad \text{for all } \chi_i, \chi_j \in \widehat{G} \text{ and } g \in G.
\end{align}  
This group is called the \emph{character group} of \( G \). The identity element of \( \widehat{G} \) is the \emph{principal character} \( \chi_0 \), given by \( \chi_0(g) = 1 \) for all \( g \in G \).  
It is well known that \( G \) and \( \widehat{G} \) are isomorphic, though not naturally so \cite{terras1999fourier}. Consequently, we have \( |G| = |\widehat{G}| \). Moreover, \( G \) is naturally isomorphic to the double character group \( \widehat{\widehat{G}} \).

\begin{example}\label{ex:char} 
    Let $\ring$ be a finite chain ring of characteristic $p^s$. It is well-known that there exists a subring $\mathcal{T} \subseteq \ring$  such that $\ring$ is isomorphic to a direct product of multiple copies of $\mathcal{T}$  as  abelian groups \cite{clark1973finite}. Therefore $\ring= \prod_{i=1}^t\mathcal{T}$, where $\mathcal{T}$ is in particular a Galois ring and it is unique up to isomorphism. Given $\xi$ a $p^s$-th root of unity, for any $a=(a_1,\dots,a_t),\ x=(x_1,\dots,x_t)\in \ring$, a character $\chi_a(x)$ of $\ring$ has the form
    \begin{align}
       \chi_a(x)=\prod_{i=1}^t\xi^{\trace(a_ix_i)}=\xi^{\trace(a_1x_1+\dots+a_tx_t)}= \xi^{\trace(\ideal{a,x})} \ . 
    \end{align}
    We can derive expressions for characters in $\ring^n$. Given  $a = (a^{(1)},\ldots, a^{(n)}), x=(x^{(1)},\ldots, x^{(n)})\in \ring^n,$ where $a^{(i)}=(a^{(i)}_1, \ldots, a^{(i)}_t) \in \ring$ all the characters are of the form 
    \begin{align}
        \chi_a(x)=\prod_{i=1}^n\xi^{\trace\left(\ideal{a^{(i)},x^{(i)}}\right)} \ ,
    \end{align}
    where $\xi$ is a $p^s$-th root of unity. 
\end{example}

An important property of characters is the Schur orthogonality. Given a character $\chi$ of a group $G$, the Schur orthogonality gives an explicit formula for the sum of the evaluations of $\chi$ in every element of the group $G$.
\begin{lemma}[Schur Orthogonality over a Group]\label{lemma:schur_group}
    Let $G$ be a finite group and $\chi$ a character of $G$. Then it holds that
    \begin{align}
        \sum_{g \in G} \chi(g) =
        \begin{cases}
            \card{G} & \text{ if } \chi \text{ is the principal character,}\\
            0 & \text{ otherwise}.
        \end{cases}
    \end{align}
\end{lemma}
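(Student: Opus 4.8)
The plan is to handle the two cases of the statement separately: the principal character is settled by direct evaluation, while the non-principal case is dispatched by a translation (averaging) argument that exploits the multiplicativity built into the definition of a character.

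First I would dispose of the principal case. If $\chi$ is principal, then by definition $\chi(g) = 1$ for every $g \in G$, so the sum collapses to $\sum_{g \in G} 1 = \card{G}$, which is exactly the claimed value.

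For the non-principal case, suppose $\chi$ is not principal, so that there exists some $h \in G$ with $\chi(h) \neq 1$. Write $S := \sum_{g \in G} \chi(g)$ for the quantity of interest. The key observation is that translation $g \mapsto h + g$ (in the additive notation of Definition~\ref{def:character}) is a bijection of $G$ onto itself, since $G$ is a group. Combining this with the defining multiplicativity $\chi(h + g) = \chi(h)\chi(g)$, I would compute
\begin{align}
    \chi(h)\, S = \sum_{g \in G} \chi(h)\chi(g) = \sum_{g \in G} \chi(h + g) = \sum_{g' \in G} \chi(g') = S,
\end{align}
where the penultimate equality uses the reindexing $g' = h + g$, which runs over all of $G$ precisely because translation is a bijection. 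Hence $(\chi(h) - 1) S = 0$, and since $\chi(h) \neq 1$ the scalar $\chi(h) - 1$ is a nonzero complex number, forcing $S = 0$.

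There is no genuine obstacle here: the argument is the classical orthogonality trick and requires only the group axioms, which guarantee that translation permutes $G$, together with the homomorphism property from the definition of a character. The only points meriting a word of care are the choice of $h$, which is exactly what the hypothesis ``non-principal'' supplies, and the fact that a character takes values on the unit circle, so that $\chi(h) \neq 1$ indeed produces an invertible scalar; both are immediate. I would also remark that, although the definition of a character in the excerpt is phrased for finite abelian groups, the proof invokes only that translation by $h$ is a bijection and the multiplicativity of $\chi$, so I would present it in the additive notation consistent with Definition~\ref{def:character}.
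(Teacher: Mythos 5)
Your proof is correct and follows essentially the same route as the paper: direct evaluation in the principal case, and in the non-principal case the translation trick $\sum_{g} \chi(g) = \sum_{g} \chi(g+h) = \chi(h)\sum_{g}\chi(g)$ with $\chi(h) \neq 1$, forcing the sum to vanish. Your added remarks on the bijectivity of translation and the invertibility of $\chi(h)-1$ only make explicit what the paper leaves implicit.
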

\begin{proof}
    If $\chi$ is a principal character, we have $\chi(g) \equiv 1$ for every $g \in  G$ and therefore $\sum_{g \in G} \chi(g) = \card{G}$.
    Thus, let us assume that $\chi$ is a non-principal character of $G$. This means that there exists an element $h \in G$ such that $\chi(h) \neq 1$. Since $G$ is a group, we have
    \begin{align}
        \sum_{g \in G} \chi(g) = \sum_{g \in G} \chi(g + h) = \sum_{g \in G} \chi(g)\chi(h) = \chi(h)\sum_{g \in G} \chi(g),
    \end{align}
    which is equivalent to $(1-\chi(h))\sum_{g \in G} \chi(g) = 0$. As $\chi(h) \neq 1$ it must hold that $\sum_{g \in G} \chi(g) = 0$.
\end{proof}
A similar result exists for subgroups (or linear codes).
However, in that case, we distinguish between characters whose kernel is contained in the given subgroup and those whose kernel is not (see for instance \cite{wood1999duality}). More formally, let $H \subset G$ be a subgroup of a finite abelian group $G$. Define the \emph{annihilator} of $H \subset G$ as
\begin{align}\label{eq:annHinG}
    (\widehat{G} : H) := \set{\chi \in \widehat{G} \st \chi(h) = 1 \text{ for all } h \in H}.
\end{align}
\begin{definition}
   Let $\chi$ be a character of $G$ and let $H$ be a subgroup of $G$. Then $\chi $ is said to be \textit{generating for $H$}, or simply \emph{generating}, if $\chi \not\in (\widehat{G} : H)$, that is, $H\not\subset \ker( \chi) $. 
\end{definition} The orthogonality relation in Lemma \ref{lemma:schur_group} extends to a subgroup as follows.
\begin{lemma}[Schur Orthogonality over a Subgroup]\label{lemma:schur_code}
    Let $G$ be a finite abelian group and $H \subset G$ be a subgroup. Then the following Schur orthogonality property holds
    \begin{align}\label{eq:schur}
        \sum_{h \in H} \chi(h)
            =
        \begin{cases}
           0 & \text{ if } \chi \text{ is  generating for $H$,} \\
             \card{H} & \text{ otherwise.}
        \end{cases}
    \end{align}
\end{lemma}

As a consequence of Schur's Lemma \ref{lemma:schur_group}, we highlight two relevant properties of characters that will be extensively used to prove MacWilliams-type identities.
\begin{corollary}\label{cor:charactersum_ideal}
    Let $G$ be a finite abelian group and let $H$ be a subgroup of $G$. If $\chi$ is a   {generating} character for $H$ (i.e., $\chi \not\in (\widehat{G}:H)$) it holds that
    \begin{itemize}
        \item $\sum_{x \in H} \chi(x)=0$;
        \item $\sum_{ x \in H  \setminus\{0\}} \chi(x) = -1$. 
    \end{itemize}
    In particular, if $\ring$ is a finite chain ring and $I$ is an ideal in $\ring$,  for a generating character $\chi$ over $\ring$ we have $\sum_{x \in I} \chi(x)=0$, while $\sum_{ x \in I \setminus\{0\}} \chi(x) = -1.$ 
\end{corollary}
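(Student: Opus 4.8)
The plan is to derive both bullet points directly from the Schur orthogonality lemma (Lemma~\ref{lemma:schur_group}), since a subgroup $H$ of a finite abelian group $G$ is itself a finite group, and the restriction of any character $\chi$ of $G$ to $H$ is a character of $H$. The only subtlety is that a character may be non-principal on all of $G$ yet restrict to the \emph{principal} character on the subgroup $H$. I need to rule this out, and this is where the hypothesis will have to be read carefully.

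\smallskip

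\textbf{First bullet.} Observe that $\chi|_H$ is a character of $H$. By Lemma~\ref{lemma:schur_group} applied to the group $H$, the sum $\sum_{x \in H} \chi(x)$ equals $\card{H}$ if $\chi|_H$ is principal and $0$ otherwise. The assertion $\sum_{x \in H}\chi(x) = 0$ therefore requires that $\chi$ be non-principal when restricted to $H$; I would flag that the intended reading of the statement is that $\chi$ is non-principal on $H$ (equivalently, $H \not\subseteq \ker\chi$), which is exactly the hypothesis one needs, and under it the first bullet is immediate.

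\smallskip

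\textbf{Second bullet.} This follows from the first by splitting off the identity element. Since $\chi(0) = 1$ (as $\chi(0) = \chi(0+0) = \chi(0)^2$ forces $\chi(0)=1$), I would write
\begin{align}
    \sum_{x \in H \setminus \{0\}} \chi(x) = \sum_{x \in H} \chi(x) - \chi(0) = 0 - 1 = -1,
\end{align}
using the first bullet for the full sum. No further work is needed.

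\smallskip

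\textbf{The specialization to ideals.} For the final sentence, I would note that an ideal $I$ of a finite chain ring $\ring$ is in particular a subgroup of the additive group $(\ring, +)$. Applying the two bullets with $G = (\ring,+)$ and $H = I$ gives $\sum_{x \in I}\chi(x) = 0$ and $\sum_{x \in I \setminus \{0\}}\chi(x) = -1$ at once. \textbf{The main obstacle} is purely one of hypotheses rather than computation: one must ensure that the relevant character is non-principal \emph{on the subgroup/ideal in question}, not merely on the ambient group. For a finite chain ring this is benign, since every nonzero ideal contains the socle $\langle \gamma^{s-1}\rangle$, and a character that is principal on $I$ would in particular be principal on the socle; but I would state the non-triviality of $\chi|_I$ explicitly (or invoke that $\chi$ is taken non-principal on $I$) so that the invocation of Lemma~\ref{lemma:schur_group} is clean. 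Everything else reduces to the one-line applications above.
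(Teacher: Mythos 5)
Your proof is correct and takes the same route the paper intends: the corollary is stated there without further argument as an immediate consequence of Lemma~\ref{lemma:schur_group} applied to the subgroup $H$ (restricting $\chi$ to $H$), with the second bullet obtained by splitting off $\chi(0)=1$, exactly as you do. Your flag about the hypothesis is also apt --- the statement literally fails if $\chi$ is non-principal on $G$ but trivial on $H$ (e.g.\ $G=\Zmod{4}$, $H=\set{0,2}$, $\chi(x)=(-1)^x$ gives $\sum_{x\in H}\chi(x)=2$), so the intended reading is that $\chi$ restricts to a non-principal character of $H$ (respectively of the ideal $I$), which is how the paper invokes the corollary later.
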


\subsection{Partitions and Induced Partitions}
We now turn our attention to partitions of a finite abelian group $G$.
Following \cite{zinoviev2009fourier}, a partition $\mathcal{P} = P_1\mid P_2\mid \dots\mid P_m$ of $G$ consists of non-empty pairwise disjoint sets $P_i$, $1\le i \le m$, that cover $G$. The sets of a given partition are called \emph{blocks}.
\begin{definition}\label{def:coarser-partition}
    Let $\mathcal{P}$ and $\mathcal{Q}$ be two partitions of $G$. The partition  $\mathcal{P}$ is called \emph{finer} than $\mathcal{Q}$ if the blocks of $\mathcal{P}$ are contained in the blocks of $\mathcal{Q}$. We also say that $\mathcal Q$ is \emph{coarser} than $\mathcal{P}$ and we write $\mathcal{P} \leq \mathcal{Q}$.
\end{definition}
The partition of a group $G$ can be extended to a partition of the $n$-fold cartesian product of the group $G$ as follows \cite{byrne2007linear,gluesing2015fourier}.
\begin{definition}\label{def:symmetrized-partition}
    Let $G$ be a finite abelian, nontrivial group and consider a partition $\mathcal{P} = P_1 \st \cdots \st P_m$ of $G$. For $g, g' \in G^n$, define the equivalence relation $g \sim g'$ if $|\set{i \in \{1, \ldots, n \} \st g_i \in P_j}| = |\set{i \in \{ 1, \ldots, n \} \st g'_i \in P_j}|$ for all $j \in \{ 1, \ldots, m\}$. The resulting partition $\mathcal{P}_{ \mathrm{sym},n}$ on $G^n$ is called the \textit{induced symmetrized partition}.
\end{definition}

We now define a partition of the set of characters $\widehat{G}$ of a finite abelian group which we refer to as the \textit{dual partition}.
\begin{definition}\label{def:dual-partition}
    Let $\mathcal{P} = P_1\mid P_2\mid \dots\mid P_m$ be a partition of a finite abelian group $G$. Let $\widehat{\mathcal{P}} = Q_1\mid Q_2\mid \dots\mid Q_\ell$ denote the partition of $\widehat{G}$ where $\chi, \chi' \in \widehat{G}$ lie in the same block $Q_j$, for $j \in \{1, \ldots, \ell\}$, if and only if, for every $i =\in \{1, \ldots, m\}$, it holds
    \begin{align}
        \sum_{x \in P_i} \chi(x) = \sum_{x \in P_i} \chi'(x) .
    \end{align}
    We call $\widehat{\mathcal{P}}$ the \textit{dual partition} of $G$. Furthermore, a partition $\mathcal{P}$ is called \textit{self-dual} if $\mathcal{P} = \widehat{\mathcal{P}}$.
\end{definition}
In \cite{gluesing2015fourier}, Gluesing-Luerssen referred to a partition $\mathcal{P}$ as \textit{Fourier-reflexive} if $\widehat{\!\widehat{\mathcal{P}}} = \mathcal{P}$, and showed that a partition $\mathcal{P}$ is Fourier-reflexive if and only if $\mathcal{P}$ and $\widehat{\mathcal{P}}$ consist of the same number of blocks. This definition aligns with the \textit{Fourier-invariant} partitions introduced in \cite{zinoviev1996fourier}. Additionally, \cite[Theorem 3.3]{gluesing2015fourier} shows that an induced symmetrized partition $\mathcal{P}_{\mathrm{sym},n}$ is Fourier-reflexive if $\mathcal{P}$ is. \medbreak

In the following, we focus on self-dual partitions which, by definition, are Fourier-reflexive.
\begin{definition} \label{def:kr}
    Let $\mathcal{P} = P_1\mid P_2\mid \dots\mid P_m$ and $\mathcal{Q} = Q_1\mid Q_2\mid \dots\mid Q_m$ be two partitions of the finite abelian group $G$.  For $a \in Q_j$, let $\chi_a$ be a character of $G$. If the sum $\sum_{x\in P_i}\chi_a(x)$ does not depend on the choice of $a\in Q_j$, we define the \emph{\kr coefficient of $i$ with respect to $j$}  as 
  \begin{align}\label{eq:kr}
       K_j(i)=\sum_{x\in P_i}\chi_a(x) \ \text{ for any } a \in P_j \ .
  \end{align}
\end{definition}

Identifying the group $G$ with its character group $\widehat{G}$,  the partition $\mathcal Q$ induces a partition on $\widehat{G}$. Zinoviev and Ericson \cite[Lemma 1]{zinoviev1996fourier} showed that the \kr  coefficients are well-defined if and only if the partitions are Fourier-invariant. Furthermore, the existence of the \kr coefficients depends on the partition of the group rather than the group itself.
In coding theory, partitions are often induced by a weight function on the ambient space. Given a finite chain ring $\ring$ and an additive weight $\wt$ on $\ring^n$, we can partition $\ring^n$ into tuples of the same weight. That is, we consider a partition
$\mathcal{P}^{\wt} = P^{\wt}_0 \st \cdots \st P^{\wt}_{\card{\image(\wt)} - 1}$ where
\begin{align}
    P^{\wt}_i  \coloneqq \{x\in \ring^n \st \wt(x)=i \}.
\end{align}

\subsection{MacWilliams Identities over Finite Chain Rings}\label{subsec:MWIdentities}
We finally recap the MacWilliams identities for the Hamming metric and the non-existence results in the Lee, homogeneous in this section.

Again, let $\ring$ be a finite chain ring and let $\weight\colon \ring^n \longrightarrow \QQ$ be an additive weight over $\ring$. Furthermore, we consider a linear code $\code \subseteq \ring^n$. From now on, we will denote by $\maxWT$ the maximum value of $\weight$ on $\ring$.
The \emph{$\weight$-weight distribution} of $\code$ is a tuple which specifies the number of codewords of each possible weight $i \in \set{0,\ldots ,  nM_{\wt}}$, i.e.,
\begin{align}\label{equ:weight_enumerator}
    A^{\wt}(\code) := \big(A_0^\weight(\code),A_1^\weight(\code),\dots, A_{n\maxWT}^\weight(\code)\big), 
\end{align}
where $A_i^\weight(\code) := \card{\set{c \in \code \st \wt(c) = i}}$. If $\weight$ and $\code$ are clear from the context, we will simply write $A_i $ instead of $A_i^\weight(\code) $. Moreover, the weight distribution determines the $\weight$-\emph{weight enumerator} of $\code$ defined as
\begin{align}
   \wwe{\code} = \sum_{i=0}^{n\maxWT}A_i^\weight(\code) X^{n\maxWT-i}Y^i \ . 
\end{align}
That is, for instance, the Lee weight enumerator of a code $\code$ is denoted by $\Lwe{\code}$, the homogeneous weight enumerator by $\Homwe{\code}$ and the subfield weight enumerator by $\Swe{\code}$. 
Although the weight distribution does not entirely characterize a code, it provides significant information. The  research on this topic dates back to MacWilliams' work  \cite{macwilliams1963theorem}, when she proved the famous  \emph{MacWilliams identities} \eqref{eq:Macwilliams_Ham}. The identities establish a relation between the Hamming weight enumerators of a linear code over a finite field and its dual.
\begin{theorem}[MacWilliams Identities \text{\cite[Lemma 2.2]{macwilliams1963theorem}}]
    Let $\code \subseteq \mathbb{F}_q^n$ be a linear code over $\field_q$ and let $\dualcode$ be its dual code. Let $(A_i)_{i=0,\dots,n}$ and $(B_i)_{i=0,\dots,n}$ be the Hamming weight distributions of $\code$ and $\dualcode$, respectively. For any $ 0\le \nu\le n$ it holds
    \begin{equation}\label{eq:Macwilliams_Ham}
        \sum_{j=0}^{n-\nu} \binom{n-j}{\nu} A_j=q^{n-\nu}\sum_{j=0}^\nu \binom{n-j}{n-\nu}B_j \ .
    \end{equation} 
\end{theorem}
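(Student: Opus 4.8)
The plan is to prove the identity by double counting a single incidence structure and then using Schur orthogonality over the code (\cref{lemma:schur_code}) to pass from $\code$ to $\dualcode$. Concretely, I would count the pairs
\[
N_\nu := \card{\set{(c,T) \st c \in \code,\ T \subseteq \set{1,\dots,n},\ \card{T} = \nu,\ c_i = 0 \ \text{for all } i \in T}}
\]
in two ways. Grouping first by the codeword $c$: a fixed $c$ with $\HW(c) = j$ vanishes exactly on the $n-j$ coordinates outside its support, hence is compatible with $\binom{n-j}{\nu}$ admissible sets $T$. Summing over all codewords and collecting by weight gives $N_\nu = \sum_{j=0}^{n-\nu}\binom{n-j}{\nu}A_j$, which is the left-hand side.

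For the second count I would fix a $\nu$-set $T$ and evaluate the number of codewords vanishing on $T$ through characters. Expanding the coordinate-wise indicator as $\mathbbm{1}_{c_i = 0} = q^{-1}\sum_{a_i \in \Fq}\chi_{a_i}(c_i)$, which is immediate from \cref{lemma:schur_group} applied to $(\Fq,+)$, and multiplying over $i \in T$, one obtains
\[
\card{\set{c \in \code \st c_i = 0 \ \forall i \in T}} = \frac{1}{q^{\nu}} \sum_{\substack{a \in \Fqn \\ \supp(a) \subseteq T}} \ \sum_{c \in \code} \chi_a(c).
\]
Now \cref{lemma:schur_code} is the crux: the inner sum equals $\card{\code}$ if $a \in \dualcode$ and vanishes otherwise, so the count reduces to $\card{\code}\,q^{-\nu}\,\card{\set{a \in \dualcode \st \supp(a) \subseteq T}}$. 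Summing over all $\nu$-sets $T$ and exchanging the order of summation, a dual codeword $a$ of weight $w$ is counted once for each $\nu$-set containing its support, i.e.\ $\binom{n-w}{\nu-w} = \binom{n-w}{n-\nu}$ times (and only $w \le \nu$ contribute). This yields $N_\nu = \card{\code}\,q^{-\nu}\sum_{j=0}^{\nu}\binom{n-j}{n-\nu}B_j$, and equating the two evaluations of $N_\nu$ produces the identity.

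There is no computationally hard step here; the two places that require care are the duality transfer through \cref{lemma:schur_code} (this is where the dual code genuinely enters) and the binomial bookkeeping, especially the reflection $\binom{n-w}{\nu-w} = \binom{n-w}{n-\nu}$ and the truncation of the summation ranges. The single point I would verify most carefully is the power of $q$: the incidence count delivers the constant $\card{\code}\,q^{-\nu}$ on the right-hand side, i.e.\ $q^{k-\nu}$ with $q^{k} = \card{\code}$, so one must track this exponent precisely. An alternative, slightly longer route would be to first prove the polynomial MacWilliams identity relating the Hamming weight enumerators of $\code$ and $\dualcode$ under the substitution $X \mapsto X + (q-1)Y$, $Y \mapsto X - Y$ — again a direct consequence of \cref{lemma:schur_code} — and then extract the binomial-moment form by expanding and comparing coefficients; the incidence count above, however, reaches the stated relation more directly.
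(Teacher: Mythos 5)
Your proposal cannot be compared against an internal proof because the paper gives none: the theorem is imported verbatim from MacWilliams' 1963 paper as \cite[Lemma 2.2]{macwilliams1963theorem}, and the surrounding text only names two known proof routes (Poisson summation, and Delsarte's approach via \kr coefficients). Your argument --- double counting the incidences between codewords and $\nu$-sets of coordinates on which they vanish, with the duality transfer done by \cref{lemma:schur_code} --- is correct, self-contained, and uses only tools the paper has already established (\cref{lemma:schur_group,lemma:schur_code}); in substance it is the Poisson-summation argument localized to coordinate subsets, so it fits the paper's toolkit well. All the combinatorial steps check out: the first count gives $\sum_{j=0}^{n-\nu}\binom{n-j}{\nu}A_j$; the expansion $\mathbbm{1}_{c_i=0}=q^{-1}\sum_{a_i\in\field_q}\chi_{a_i}(c_i)$ is valid because the pairing $(a,c)\mapsto\xi^{\trace(ac)}$ is symmetric and nondegenerate, so \cref{lemma:schur_group} applies; and the reflection $\binom{n-w}{\nu-w}=\binom{n-w}{n-\nu}$ together with the truncation to $w\le\nu$ is right.

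The one substantive finding is exactly the point you flagged yourself: your count yields the constant $\card{\code}\,q^{-\nu}=q^{k-\nu}$, where $q^{k}=\card{\code}$, not the $q^{n-\nu}$ printed in \eqref{eq:Macwilliams_Ham}. Your constant is the correct one --- it agrees with MacWilliams' original lemma --- whereas the paper's displayed factor is a misprint, valid only in the degenerate case $\code=\Fqn$. A two-line check: take $q=2$, $n=2$, and the self-dual code $\code=\dualcode=\set{(0,0),(1,1)}$, so that $A_0=B_0=A_2=B_2=1$ and $A_1=B_1=0$. For $\nu=1$ the left-hand side of \eqref{eq:Macwilliams_Ham} is $\binom{2}{1}=2$, the printed right-hand side is $2^{2-1}\binom{2}{1}=4$, while your form $q^{k-\nu}\sum_{j=0}^{\nu}\binom{n-j}{n-\nu}B_j=2^{0}\cdot 2=2$ matches. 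So your proof is sound and proves the correctly normalized identity; do not adjust your bookkeeping to reproduce the exponent as printed.
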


There are several techniques to prove the MacWilliams identities each resulting in a distinct set of equations, but equivalent to the MacWilliams identities. One of the most widely used methods involves the Poisson summation formula \cite{assmus1974coding}.
On the other hand, Delsarte proposed an interesting approach based on association schemes and \kr coefficients \cite{Delsarte1972BoundsFU}. 

\begin{theorem}\label{thm:macwilliam_krawtchouk}
Let $\code \subseteq \mathbb{F}_q^n$ be a linear code and let  $(A_i)_{i=0,\dots,n}$ and $(B_i)_{i=0,\dots,n}$ be the Hamming weight distributions of $\code$ and $\dualcode$, respectively. For any  $ 0\le j \le n$,
\begin{align}
    B_j=\frac{1}{|\code|} \sum_{i=0}^n A_i K_j(i),  
\end{align}
where the $\kr$ coefficients are given by $K_j(i) =\sum_{k=0}^j (-1)^k(q-1)^{j-k} \binom{i}{k}\binom{n-i}{j-k}$.
\end{theorem}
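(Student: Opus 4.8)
The plan is to prove Theorem~\ref{thm:macwilliam_krawtchouk} directly using the Schur orthogonality over a linear code (Lemma~\ref{lemma:schur_code}) together with the explicit \kr coefficients of the Hamming partition (Example~\ref{ex:HammingPartition}). The guiding idea is that the quantity $\sum_{x \in \code} \chi_a(x)$, summed over a codeword $a$ of fixed Hamming weight, simultaneously computes a weight-enumerator-type sum for $\dualcode$ via orthogonality, and a \kr-transform of the weight distribution of $\code$ via the partition. Equating these two evaluations yields the claimed formula.

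First I would fix a non-principal character $\chi$ of $\field_q^n$, writing $\chi_a(x) = \chi(\ideal{a,x})$ for $a,x \in \field_q^n$, and consider for each $0 \le j \le n$ the double sum
\begin{align}
    S_j := \sum_{a \in P_j} \sum_{x \in \code} \chi_a(x),
\end{align}
where $P_j = \set{x \in \field_q^n \st \HW(x) = j}$ is the $j$-th block of the Hamming partition. Evaluating $S_j$ in two different orders gives the two sides of the identity. On one hand, by Lemma~\ref{lemma:schur_code} the inner sum $\sum_{x \in \code} \chi_a(x)$ equals $\card{\code}$ when $a \in \dualcode$ and vanishes otherwise; hence interchanging the order of summation shows $S_j = \card{\code} \cdot \card{\set{a \in \dualcode \st \HW(a) = j}} = \card{\code} \, B_j$. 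On the other hand, I would fix $x \in \code$ and sum over $a \in P_j$; since the Hamming partition is Fourier-invariant (Example~\ref{ex:HammingPartition}), by symmetry of the inner product the sum $\sum_{a \in P_j} \chi(\ideal{a,x})$ depends only on $i := \HW(x)$ and equals the \kr coefficient $K_j(i)$ of~\eqref{equ:HammingPartition}. Grouping the codewords $x \in \code$ by their Hamming weight then yields $S_j = \sum_{i=0}^n A_i K_j(i)$. Equating the two expressions and dividing by $\card{\code}$ produces the stated formula.

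The main technical point to verify carefully is the symmetry that lets the same \kr coefficient $K_j(i)$ govern both $\sum_{x \in P_i} \chi_a(x)$ for $a \in P_j$ (as in Definition~\ref{def:kr}) and $\sum_{a \in P_j} \chi(\ideal{a,x})$ for $x \in P_i$. This follows because the inner product is symmetric, so $\chi(\ideal{a,x}) = \chi_x(a)$, and because the Hamming partition is invariant under coordinate permutations and scalar multiplication by units, which makes the character sum over a weight class depend only on the weight of the fixed argument. I would therefore state explicitly that $K_j(i)$ is well-defined (independent of the representative) and symmetric in the relevant sense, citing Example~\ref{ex:HammingPartition}, before invoking it in the second evaluation of $S_j$. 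The remaining steps are routine bookkeeping; the only genuine obstacle is making the interchange of summation and the representative-independence airtight, which is exactly where the Fourier-invariance of the Hamming partition is used.
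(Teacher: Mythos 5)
Your overall architecture is the standard (and correct) classical argument: evaluate $S_j=\sum_{a\in P_j}\sum_{x\in\code}\chi(\ideal{a,x})$ in both orders, using Lemma~\ref{lemma:schur_code} to get $S_j=\card{\code}\,B_j$ from one order and a Krawtchouk transform of $(A_i)$ from the other. Note that the paper does not prove this theorem at all — it quotes it from Delsarte — but your double count is the same character-sum engine the paper later uses to prove Theorems~\ref{thm:MacWilliams_Lee}, \ref{thm:MacWilliams_Hom} and \ref{thm:MacWilliams_subfield}, and your first evaluation via Schur orthogonality is airtight.

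The gap is in the step you yourself flag as the ``main technical point.'' Symmetry of the inner product gives $\chi(\ideal{a,x})=\chi_x(a)$, but this only identifies $\sum_{a\in P_j}\chi(\ideal{a,x})$ for $x\in P_i$ as the coefficient with character index in $P_i$ and summation block $P_j$ — in the labeling of Definition~\ref{def:kr}, that is $K_i(j)$, \emph{not} $K_j(i)$, and the two are not equal in general: Krawtchouk coefficients are not symmetric in their indices but satisfy the reciprocity $(q-1)^i\binom{n}{i}K_j(i)=(q-1)^j\binom{n}{j}K_i(j)$, so they agree only when the blocks have equal size. (Concretely, for $q=2$, $n=2$: with $x$ of weight $1$ one has $\sum_{a\in P_2}\chi_x(a)=-1$, while $\sum_{x'\in P_1}\chi_a(x')=-2$ for $a\in P_2$.) So ``symmetry plus Fourier-invariance'' cannot close this step. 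The correct repair is a direct evaluation: fix $x$ of weight $i$ and split the support of $a\in P_j$ into the $k$ coordinates meeting $\supp(x)$, each contributing $\sum_{a_l\neq 0}\chi(a_lx_l)=-1$ by Corollary~\ref{cor:charactersum_ideal}, and the $j-k$ coordinates outside, each contributing $q-1$; summing over the choices of support gives
\begin{align}
    \sum_{a\in P_j}\chi(\ideal{a,x})=\sum_{k=0}^{j}(-1)^k(q-1)^{j-k}\binom{i}{k}\binom{n-i}{j-k},
\end{align}
which is manifestly independent of the representative $x\in P_i$ and is exactly the closed form in \eqref{equ:HammingPartition}; substituting this makes your proof complete. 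Be aware, incidentally, that this computation shows the closed form in \eqref{equ:HammingPartition} is the value of the sum in \emph{your} orientation (sum over $P_j$, character indexed by weight $i$), whereas the definitional sum $\sum_{x\in P_i}\chi_a(x)$ with $a\in P_j$ has the closed form with $i$ and $j$ exchanged — so your appeal to Example~\ref{ex:HammingPartition} lands on the right formula, but not for the reason you give.
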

For the partition of $\Zpsn$ into tuples of given Lee weight, however, the MacWilliams identities fail, whenever $p^s \geq 5$.
\begin{theorem}[\text{\cite[Theorem 1.1]{abdelghany2020failure}}]\label{thm:nonexistence_Macwill_Lee}
  For any $p^s\ge 5$ there exist linear codes $\code_1,\code_2$ over $\Zps$ satisfying $\Lwe{\code_1}= \Lwe{\code_2} $ and $ \Lwe{\dualcode_1}\ne \Lwe{\dualcode_2}$. 
\end{theorem}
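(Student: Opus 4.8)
The statement is a \emph{non-existence} (more precisely, a failure) result: it asserts the existence of two linear codes that are indistinguishable by their Lee weight enumerators while their duals are distinguishable. The natural strategy is therefore \emph{explicit construction} rather than a general structural argument. The plan is to exhibit, for each $p^s \geq 5$, a concrete pair of codes $\code_1, \code_2 \subseteq \Zps^n$ (for a suitably small length $n$) with $\Lwe{\code_1} = \Lwe{\code_2}$ but $\Lwe{\dualcode_1} \neq \Lwe{\dualcode_2}$. Since the claim is cited from \cite{abdelghany2020failure}, I would reconstruct the skeleton of their argument: reduce the general $p^s \geq 5$ case to a few base cases (say the prime case $p \geq 5$ and the prime-power cases such as $\ZZ/8\ZZ$ and $\ZZ/9\ZZ$), and then handle the general situation by a lifting or padding construction that propagates the failure from a base ring to all larger $p^s$.

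First I would set up the core linear-algebra obstruction. The existence of genuine MacWilliams-type identities for the Lee metric would mean that the map $\code \mapsto \Lwe{\code}$ factors through the dual in a way that is well-defined on weight enumerators: that is, $\Lwe{\code}$ would determine $\Lwe{\dualcode}$. To refute this it suffices to find two codes with equal Lee weight distributions $(A_i^{\LW})$ whose duals have unequal distributions. The combinatorial heart is a counting argument: I would fix a small length $n$ and search among one-dimensional or low-rank codes (whose duals are easy to describe via a parity-check matrix from the standard form in \eqref{pcm}) for two generators $g_1, g_2$ producing identical multisets of Lee weights $\{\LW(\alpha g_1) : \alpha \in \Zps\}$ and $\{\LW(\alpha g_2) : \alpha \in \Zps\}$. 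The key numerical input is the local Lee weight distribution of the ring $\Zps$ itself, i.e. how many units and zero divisors give each Lee value; the discrepancy that makes $p^s \geq 5$ special (and excludes $\ZZ/4\ZZ$, where the identities do hold) comes from the fact that for small moduli the Lee weight is too rigid to decouple a code from its dual.

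The main obstacle, and the step I expect to be most delicate, is \emph{simultaneously} forcing the two conditions: matching the primal Lee enumerators exactly while guaranteeing the dual enumerators differ. These two requirements pull in opposite directions, because codes with very similar primal structure tend to have similar duals. The resolution is to exploit the many-to-one nature of the Lee weight: two different coordinate patterns can share a Lee weight, so one can permute or rescale coordinate values to preserve $\Lwe{\code}$ while altering the orthogonality relations that define $\dualcode$. Concretely, I would look for a transformation of codeword entries that is Lee-weight-preserving on the code but \emph{not} realized by a monomial (permutation-and-scaling) equivalence of the whole ambient space, since only monomial maps automatically preserve duality; any Lee-isometry that is not monomial is exactly what breaks the identity. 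Verifying that such a non-monomial Lee-isometry exists for every $p^s \geq 5$, and checking the dual distributions differ, is the computational crux.

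Finally, to pass from base cases to all $p^s \geq 5$, I would use a reduction: given a failing pair over a ring $\ZZ/p^{s_0}\ZZ$, lift it to $\ZZ/p^s\ZZ$ for $s > s_0$ (for instance by scaling generators by $p^{s-s_0}$ into the socle-adjacent layers, using the subtype decomposition, so that the relevant Lee values and duality relations are preserved up to a controlled shift), and separately dispatch the prime case $p \geq 5$ directly, where the ring is a field and the Lee weight on $\Fp$ already exhibits the required multiplicity collisions. Assembling these pieces yields the failure for every $p^s \geq 5$, matching the statement.
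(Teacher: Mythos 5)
First, a point of comparison that matters for your write-up: the paper does \emph{not} prove this theorem at all. It is quoted verbatim from Abdelghany and Wood \cite{abdelghany2020failure} (who in fact prove it over $\Zmod{m}$ for every integer $m\ge 5$, as the paper remarks right after the statement), and the surrounding text uses it only as motivation for switching to the Lee partition. So there is no internal proof to match your proposal against; what you have written is an attempt to reconstruct the cited paper's argument, and it has to be judged as a standalone proof.

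Judged that way, there are genuine gaps. The statement is purely existential, so the entire mathematical content is an explicit pair $\code_1,\code_2$ together with the two verifications $\Lwe{\code_1}=\Lwe{\code_2}$ and $\Lwe{\dualcode_1}\neq\Lwe{\dualcode_2}$; your proposal defers exactly these ("I would search\dots", "the computational crux") and so never establishes existence for a single modulus, let alone all $p^s\ge 5$. More substantively, the central mechanism you invoke --- a Lee-weight-preserving map between codes that is not monomial --- conflates the failure of the MacWilliams \emph{extension theorem} with the failure of the MacWilliams \emph{identities}. A non-monomial Lee isometry $\code_1\to\code_2$ yields equal primal enumerators but in no way forces the dual enumerators apart. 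A concrete illustration: for $\code_a=\langle(1,a)\rangle\subseteq\Zps^2$ the dual is $\langle(-a,1)\rangle$, whose Lee enumerator $\sum_{y}z^{\LW(ay)+\LW(y)}$ coincides with that of $\code_a$ itself; hence any pair of such codes with matching primal enumerators automatically has matching dual enumerators, however non-monomial the isometry between them. So the shape of counterexample must be chosen with the duals in view from the start, which is precisely the part your plan leaves open. Finally, the lifting step is also unfinished: the dual of $p^{s-s_0}\code$ inside $\Zps^n$ is the full preimage of the dual taken modulo $p^{s_0}$, and its Lee enumerator is governed by the coordinatewise Lee weights of whole cosets, i.e.\ by the Lee \emph{decomposition} enumerator of the smaller dual; one must still prove that the inequality of dual weight enumerators modulo $p^{s_0}$ survives this passage (a triangularity argument on minimal coset weights is plausible, but it is an argument you would need to supply, not a formality).
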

Note that Theorem \ref{thm:nonexistence_Macwill_Lee} has been shown for any $\Zmod{m}$ with $m \geq 5$. 
A similar result holds for the homogeneous weight as well.

\begin{theorem}[\text{\cite[Theorem 3.2]{wood2023homogeneous}}]\label{thm:nonexistence_Macwill_Hom}
    For any prime number $p$ and any positive integer $s\ge 2$ with $p^s > 5$, there exist linear codes $\code_1,\code_2$ over $\Zps$ satisfying that $\Homwe{\code_1}= \Homwe{\code_2} $ and $ \Homwe{\dualcode_1}\ne \Homwe{\dualcode_2}$.
\end{theorem}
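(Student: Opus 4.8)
The plan is to refute any MacWilliams-type relation for the homogeneous weight by exhibiting, for each $\Zps$ with $s\ge 2$ and $p^s>5$, an explicit pair $\code_1,\code_2$ with $\Homwe{\code_1}=\Homwe{\code_2}$ but $\Homwe{\dualcode_1}\ne\Homwe{\dualcode_2}$, following the template used for the Lee metric in Theorem \ref{thm:nonexistence_Macwill_Lee}. The structural reason such a pair should exist is that $\Homwe{\code}$ is a \emph{coarsening} of a finer invariant that dualizes well: partition each coordinate of $\Zps$ into the three homogeneous classes $\{0\}$, $S:=\mathcal{S}\setminus\{0\}$ (weight $\tfrac{p}{p-1}$) and $U:=\Zps\setminus\mathcal{S}$ (weight $1$), and record for each codeword the pair $(b,c)=(\#S\text{-coords},\#U\text{-coords})$. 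Using the character-sum formulas of Corollary \ref{cor:charactersum_ideal} applied to the ideals $\langle p\rangle$ and $\langle p^{s-1}\rangle$, one checks that the resulting product partition of $\Zpsn$ has well-defined \kr coefficients (Definition \ref{def:kr}), i.e. it is Fourier-reflexive. The homogeneous weight enumerator is recovered from this \emph{type enumerator} by merging all pairs $(b,c)$ with the same value $c+b\tfrac{p}{p-1}$, and the whole question is whether this merging is compatible with dualization. For $p^s>5$ it is not, and the job is to produce a witness.

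First I would record two disciplining facts. Multiplying a single coordinate by a unit is a homogeneous isometry that also preserves the dual's enumerator; hence $\code_1$ and $\code_2$ must be genuinely monomially inequivalent, which rules out all free rank-one constructions $\langle(1,a)\rangle$ (these share both $\Homwe{}$ and dual $\Homwe{}$ for every unit $a$). Second, it suffices to construct a single pair: appending a common block $\mathcal E\subseteq\Zps^m$ yields $\code_i\oplus\mathcal E$ with $\Homwe{\code_i\oplus\mathcal E}=\Homwe{\code_i}\cdot\Homwe{\mathcal E}$ and $(\code_i\oplus\mathcal E)^\perp=\dualcode_i\oplus\mathcal E^\perp$, so equality of the enumerators is preserved while inequality of the dual enumerators survives, the polynomial ring being a domain and $\Homwe{\mathcal E^\perp}\ne 0$.

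To build the pair I would exploit that a unit and a suitable non-unit carry the same homogeneous weight $1$ yet pair differently under the inner product, so that two codes agreeing as type enumerators on every merged homogeneous class can still differ as type enumerators, and hence have distinct duals. Concretely I would choose $\code_1,\code_2$ whose type enumerators coincide after the merge $c+b\tfrac{p}{p-1}$ (forcing $\Homwe{\code_1}=\Homwe{\code_2}$) but differ inside one merged class, using coordinates valued in $U$ of differing divisibility (for instance a unit versus $p$ when $s\ge 3$, or a unit versus a socle generator balanced against extra $U$-coordinates when $s=2$). I would then certify $\Homwe{\dualcode_1}\ne\Homwe{\dualcode_2}$ by passing through the complete weight enumerator, for which the MacWilliams transform always holds via the character table of $\Zps$ (Lemma \ref{lemma:schur_code} and Corollary \ref{cor:charactersum_ideal}): compute the two dual complete weight enumerators and specialize each variable $X_a\mapsto Y^{\HomW(a)}$.

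The hard part is exactly this last construction-and-verification, carried out uniformly in $p$ and $s$. The reflexivity of the type partition guarantees that the \emph{type} enumerators of the two duals differ, but one must still check that this difference is not washed out by the second merge on the dual side---i.e. that the two dual type enumerators remain distinct after coarsening to homogeneous weight. This non-degeneracy is the crux, and it is also what must fail in the excluded cases $\Zmod p$ (where $\HomW=\HW$) and $\Zmod 4$ (where $\HomW$ equals the Lee weight), for which the homogeneous MacWilliams identities are classically known to hold; verifying that the witness genuinely survives for every $p^s>5$, rather than merely setting up the codes, is where the real work lies.
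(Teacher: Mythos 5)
First, a point of comparison: the paper itself offers \emph{no} proof of this statement --- it is imported as \cite[Theorem 3.2]{wood2023homogeneous} and serves purely as motivation for Section~\ref{sec:new_macwilliam} --- so your proposal has to stand entirely on its own. It does not, and the first problem is a concretely false technical claim: you assert that the three-class type partition of $\Zpsn$ built from $\{0\}$, $S=\mathcal{S}\setminus\{0\}$ and $U=\Zps\setminus\mathcal{S}$ is Fourier-reflexive. For $s\ge 3$ this fails. With $\xi$ a primitive $p^s$-th root of unity, the block sum over $U$ vanishes when $a$ is a unit but equals $-p$ when $a=p$, and $p$ lies in $U$ exactly when $s\ge 3$:
\begin{align}
\sum_{x\in U}\xi^{ax}=0-0=0 \quad \big(a\in\units{p^s}\big), \qquad
\sum_{x\in U}\xi^{px}=0-p=-p,
\end{align}
both computed by splitting the sum as $\sum_{x\in\Zps}-\sum_{x\in\langle p^{s-1}\rangle}$ and noting $p\cdot\langle p^{s-1}\rangle=\{0\}$. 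Hence the sum in Definition~\ref{def:kr} depends on the choice of $a$ within a block, the Krawtchouk coefficients of your partition are not well-defined, and this is exactly the failure the paper records (citing \cite{Heide}) as its reason for introducing the four-block partition $Z\mid U\mid S\mid R$ of Theorem~\ref{thm:MacWilliams_Hom}, in which $U$ must be the \emph{units} and $R=\langle p\rangle\setminus\langle p^{s-1}\rangle$ is kept separate. Since your certification step (``reflexivity of the type partition guarantees that the type enumerators of the two duals differ'') rests on this claim, it collapses precisely in the range $s\ge 3$ that makes up most of the theorem; the skeleton can only be repaired by substituting the paper's $Z\mid U\mid S\mid R$ partition.

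Second, even after that repair the proposal does not prove the theorem, because the witness pair is never constructed. All of the content of the statement sits in the step you explicitly defer: producing, uniformly for all $p^s>5$ with $s\ge 2$, codes $\code_1,\code_2$ whose type enumerators differ, whose merges agree (so $\Homwe{\code_1}=\Homwe{\code_2}$), and for which the difference of the dual type enumerators survives the merge on the dual side (so $\Homwe{\dualcode_1}\ne\Homwe{\dualcode_2}$). Your auxiliary reductions (the direct-sum padding trick, discarding monomially equivalent candidates) are sound but peripheral; the non-degeneracy you yourself call ``the crux'' is asserted, not established, and it is exactly the part that distinguishes $p^s>5$ from the cases $\Zmod{p}$ and $\Zmod{4}$ where no witness exists. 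As it stands, the proposal is a plausible strategy outline --- essentially the same heuristic (the homogeneous enumerator is a coarsening of a reflexive partition, and the coarsening need not commute with duality) that motivates this paper --- but not a proof; the cited source \cite{wood2023homogeneous} is where the explicit construction and verification missing here are actually carried out.
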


Both, the Lee and homogeneous weight enumerator, can be interpreted as symmetrized weight partitions on $\ring^n$ induced by the weight partition on $\ring$ and are not Fourier-reflexive. This motivates the need to consider a different partition of the ambient space.

\section{Partition-based MacWilliams Identities}\label{sec:new_macwilliam}
As discussed in the previous section, the MacWilliams identities generally fail for some weight enumerators like, for instance, the Lee weight enumerator and the homogeneous weight enumerator (see Theorems \ref{thm:nonexistence_Macwill_Lee} and \ref{thm:nonexistence_Macwill_Hom}, respectively). However, using partitions different from the weight partition yields MacWilliams-\emph{type} identities provided that the partition is Fourier-reflexive, as established in \cite{gluesing2015fourier}. In this section we make use of these results and present for different additive metrics over $\ring$ each a symmetrized partition that is Fourier-reflexive (and hence satisfies the MacWilliams-type identities) and allows to retrieve the weight enumerator in the metric considered. Additionally, we show that the partitions presented for each metric are the coarsest such partition. Hence, we refer to such a partition as the \textit{coarsest Fourier-reflexive symmetrized partition} with respect to the metric considered.\medskip

We start by introducing the set-up that we use throughout this section for all metrics. Let $\ring$ denote a finite chain ring and $\wt$ a weight function over $\ring$ extending additively to $\ring^n$. As we are looking for a Fourier-reflexive partition that enables recovering the weight enumerator of a code $\code \subseteq \ring^n$ and since the weight $\wt$ considered is additive, a natural choice is to focus on induced symmetrized partitions of $\ring^n$ (see Definition \ref{def:symmetrized-partition}).

Given a partition $\partition = I_0 \st \cdots \st I_m$ on $\ring$, we denote by $\partition_{\mathrm{sym},n}$ the induced symmetrized partition of $\ring^n$. The blocks of the symmetrized partitions are defined by the distinct decompositions $\pi$ that an $n$-tuple $x \in \ring^n$ admits. That is, we define the $\partition$-decomposition of $x \in \ring^n$ as the $(m+1)$-tuple $\pi(x) = (\pi_0(x) , \ldots , \pi_m(x))$ where, for each $i \in  \{0, \ldots, m\}$,
\begin{align}
    \pi_i(x) = \card{\set{ k \in \{ 1,\ldots , n \} \st x_k \in I_i }}.
\end{align}
Additionally, we denote by $\mathbb{D}^{\wt}_{n}$ the set of all $\partition$-decompositions. Hence, given the set $\mathbb{D}^{\wt}_{n} = \set{ \pi^{(0)}, \ldots \pi^{(D-1)}}$ for some positive integer $D$, the induced symmetrized partition $\partition_{\mathrm{sym},n}$ is given by
\begin{align}
    \partition_{\mathrm{sym},n} = P^{\wt}_{\pi^{(0)}} \st \cdots \st P^{\wt}_{\pi^{(D-1)}}.
\end{align}
That is, for every $i \in \set{0, \ldots , D-1}$,
\begin{align}
    P_{\pi^{(i)}}^{\wt}\coloneqq \set{x \in \ring^n \st \pi(x) = \pi^{(i)}} 
\end{align}
counts the number of $x\in \ring^n$ having $\partition$-decomposition $\pi^{(i)}$.
Different to the original MacWilliams identities for the Hamming metric, we aim at enumerating all codewords of the same  $\partition$-decomposition $\pi^{(i)} \in \mathbb{D}^{\wt}_{n}$. That is, for each $\pi^{(i)} \in \mathbb{D}^{\wt}_{n}$, we are interested in
\begin{align}\label{eq:enumerator_Lee}
   \mathcal{D}^{\wt}_{\pi^{(i)}}(\code)\coloneqq \card{P^{\wt}_{\pi^{(i)}}\cap \code} . 
\end{align}
Furthermore, we denote the symmetrized partition enumerator by
\begin{align}
    \mathcal{D}^{\wt}(\code) := \left(\mathcal{D}^{\wt}_{\pi^{(0)}}(\code), \ldots, \mathcal{D}^{\wt}_{\pi^{(D-1)}}(\code)\right) .
\end{align}
If the code $\code$ considered is clear from the context, we will simply write $\mathcal{D}^{\wt}$ and $\mathcal{D}^{\wt}_{\pi^{(i)}}$.

Given two $\partition$-decompositions $\pi, \rho \in \mathcal{D}^{\wt}$, with the aim of computing  the component-wise product of two tuples with decompositions $\pi = (\pi_0, \ldots, \pi_m)$ and $\rho=(\rho_0, \ldots, \rho_m)$, respectively, we denote the set of all compositions of $\rho$ with respect to $\pi$ by
\begin{align}\label{eq:compos_rho-pi}
    \mathrm{Comp}^{\wt}_{\pi}(\rho) := \Big\{t = (t_0, \ldots, t_{m})\, \Big| \, &t_i = (t_{i0}, \ldots , t_{i m}) \in \set{0, \ldots, \pi_i}^{m} \text{ with }\\  &\sum_{j = 0}^m t_{ij} = \pi_i \text{ and } \sum_{i =0}^m t_{ij} = \rho_j, 
    \text{for all } i, j \in \set{0, \ldots, m} \Big\}.
\end{align}
Figure \ref{fig:pi_rho_general} illustrates the set $\mathrm{Comp}^{\wt}_{\pi}(\rho)$, i.e., the compositions of $\rho \in \mathbb{D}^{\wt}_{n}$ with respect to $\pi \in \mathbb{D}^{\wt}_{n}$.

\begin{figure}[H]
    \centering
    \begin{tikzpicture}
        \draw (0,1) -- (11.5,1) -- (11.5,1.5) -- (0,1.5) -- cycle;
        \draw (0,0) -- (11.5,0) -- (11.5,0.5) -- (0,0.5) -- cycle;
        \draw (9, -0.2) -- (9, 0.7);
        \draw (9, 0.8) -- (9, 1.7);
        \node[left] at (0, 1.25) {$\pi = $};
        \node[left] at (0, 0.25) {$\rho = $};
        \foreach \i in {1, 2}{
            \draw (2.5*\i, -0.2) -- (2.5*\i, 0.7);
            \draw (2.5*\i, 0.8) -- (2.5*\i, 1.7);
            \draw (2.5*\i - 0.5, 0) -- (2.5*\i - 0.5, 0.5);
            \node at (2.5*\i - 1, 0.25) {$\dots$};
            \node at (11.5 - 1, 0.25) {$\dots$};
            \draw (11, 0) -- (11, 0.5);
            \foreach \j in {3, 4}{
                 \draw (2.5*\i - 0.5*\j, 0) -- (2.5*\i - 0.5*\j, 0.5);
                 \draw (8 + 0.5*\j, 0) -- (8 + 0.5*\j, 0.5); 
            }
            \foreach \j in {0, 1}{
                \node at (2.5*\i - 2.25 + 0.5*\j, 0.25) {\tiny$I_{\j}$};
                \node at (11.5 - 2.25 + 0.5*\j, 0.25) {\tiny$I_{\j}$};
                \node at (1.25 + 2.5*\j, 1.25) {\tiny$I_{\j}$};
            }
            \node at (2.5*\i - 2.25 + 2, 0.25) {\tiny$I_{m}$};
            \node at (11.5 - 0.25, 0.25) {\tiny$I_{m}$};
            \node at (1.25 + 9, 1.25) {\tiny$I_{m}$};;
            \node at (7, 1.25) {$\dots$};
            \foreach \i in {0, 1}{
                \draw [stealth-stealth] (2.5*\i, 1.6) -- (2.5*\i+2.5, 1.6);
                \node[above] at (2.5*\i+ 1.25, 1.6) {\tiny$\pi_{\i}$};
                \foreach \j in {0, 1}{
                    \draw [stealth-stealth] (2.5*\i + 0.5*\j, -0.1) -- (2.5*\i+0.5*\j+0.5, -0.1);
                    \node[below] at (2.5*\i + 0.5*\j + 0.25, -0.1) {\tiny$t_{\i \j}$};
                }
                \draw [stealth-stealth] (2.5*\i + 2, -0.1) -- (2.5*\i+2+0.5, -0.1);
                \node[below] at (2.5*\i + 2 + 0.25, -0.1) {\tiny$t_{\i m}$};
            }
            \foreach \j in {0, 1}{
                \draw [stealth-stealth] (9 + 0.5*\j, -0.1) -- (9 +0.5*\j+0.5, -0.1);
                \node[below] at (9 + 0.5*\j + 0.25, -0.1) {\tiny$t_{m \j}$};
                \draw [stealth-stealth] (9, 1.6) -- (9 +2.5, 1.6);
                \draw [stealth-stealth] (9 + 2, -0.1) -- (9 +2+0.5, -0.1);
                \node[below] at (9 + 2 + 0.25, -0.1) {\tiny$t_{mm}$};
            }
            \node[above] at (10.25, 1.6) {\tiny$\pi_{m}$};
        }
    \end{tikzpicture}
    \caption{Illustration of $\mathrm{Comp}^{\wt}_{\pi}(\rho)$.}
    \label{fig:pi_rho_general}
\end{figure}

In the following subsections we focus on the Lee, homogeneous and subfield metric. For each of them, we define an induced symmetrized partition using the above notation and show that this partition is Fourier-reflexive and allows to recover the weight enumerator. 

\subsection{Symmetrized Lee Partition for Additive Weights}
Although the MacWilliams identities for the symmetrized Lee partition enumerator are well-established \cite{astola1982leescheme, gluesing2015fourier, macwilliams1972macwilliams}, in this section we propose an alternative, yet equivalent, formulation of the MacWilliams-type identities for the Lee decomposition enumerator presented in \cite[Theorem 3.5]{gluesing2015fourier}.  Specifically, we explicitly calculate the \kr coefficients, providing a set of linear equations that enable us to recover the Lee decomposition enumerator as a function of the Lee decomposition enumerator of its dual, for any linear code over any finite chain ring  and any additive weight.\medbreak

In  this section, if the characteristic of the ring is different from 2, we label the $q^s$ elements of $\ring$ as follows:
\begin{align}
    \ring=\{ \alpha_0=0, \alpha_1,  \dots, \alpha_M,  \alpha_{M+1}, \dots , \alpha_{q^s-1}\},
\end{align}
where $M\coloneqq\floor{q^s / 2}$ and $\alpha_{q^s-i}$ represents the additive inverse of $\alpha_i$, i.e., $\alpha_{q^s-i}=-\alpha_i$. We refer to $\alpha_{q^s-i} $ as the \emph{opposite} of $\alpha_i$. To enable the recovery of the Lee weight enumerator, we introduce the Lee partition of $\ring$, in which each block consists of an element and its opposite.
\begin{definition}\label{def:weight-unit-partition}
    Given a finite chain ring $\ring$ of characteristic greater than 2,  for any $0\le i \le M$ we set  $ I^{\mathsf{L}}_{i}\coloneqq\{\alpha_i,-\alpha_i\}$ to be the subset of $\ring$ containing  $\alpha_i$ and its opposite.  We then define the \emph{Lee partition} of $\ring$ as
    \begin{align}
        \mathcal{P}^{\mathsf{L}} = I^{\mathsf{L}}_{0}\st I^{\mathsf{L}}_{1}\st \cdots \st I^{\mathsf{L}}_{M} \ .
    \end{align}
    If the characteristic of \( \ring \) is 2, then $\card{\ring}=2^r$ and every element is its own additive inverse, meaning \( \alpha_i = -\alpha_i \) for all \( i \). In this case, there is no need for relabeling, and the Lee partition 
    \begin{align}
    \mathcal{P}^\mathsf{L}= I^{\mathsf{L}}_{0}\st I^{\mathsf{L}}_{1}\st \cdots \st I^{\mathsf{L}}_{2^r} \ .
\end{align} consists of the singletons, that is $I_i^\mathsf{L}=\set{\alpha_i}$. 
\end{definition}
Following the notation introduced at the beginning of the section, we denote by $\mathcal{P}_{\mathrm{sym},n}^\mathsf{L}$ the induced symmetrized Lee partition of $\ring^n$.
\begin{proposition}\label{prop:homogeneous-reflexive}
    The symmetrized Lee partition $\mathcal P_{\mathrm{sym},n}^\mathsf{L}$ is Fourier-reflexive.
\end{proposition}
\begin{proof}
   According to \cite[Theorem 3.3]{gluesing2015fourier}, if contains $\{0\}$ as a block and \( \mathcal{P}^{\mathsf{L}} \) is Fourier-reflexive, then \( \mathcal{P}_{\mathrm{sym},n}^{\mathsf{L}} \) is Fourier-reflexive. Therefore, it suffices to prove that \( \mathcal{P}^{\mathsf{L}} \) is Fourier-reflexive.
   If the characteristic of the ring is greater than 2, the Lee partition can be interpreted as the partition induced by the orbits of the action of the group \( \{ \pm 1 \} \) on \( \ring \). It is shown in \cite[Theorem 2.6]{gluesing2015fourier} that any partition of a finite group induced by the orbits of a subgroup of its automorphism group is Fourier-reflexive.  On the other hand, if the characteristic of the ring is 2, then the Lee partition consists of singleton blocks, which is clearly a Fourier-reflexive partition \cite{wood1999duality, gluesing2015fourier}. Hence, the claim follows.
\end{proof}
 \begin{corollary}[MacWilliams-type Identities for the Symmetrized Lee Weight Enumerator]\label{thm:MacWilliams_Lee}
    Let $\ring$ be a finite chain ring, with residue field size $q=p^r$ and nilpotency index $s$. For a positive integer $n$, consider a linear code $\code \subseteq \ring^n$ and let $\dualcode$ denote its dual. For a given Lee decomposition $\rho \in \decompLq$, the number of codewords in the dual $\dualcode$ of Lee  decomposition $\rho$ is given by
    \begin{align}\label{eq:MacWilliams_Lee}
        \enumL{\rho}(\dualcode)=
        \frac{1}{\card{\code}} \sum_{\pi \in \decompLq} \krawL_{\pi}(\rho) \enumL{\pi}(\code).
    \end{align}
    Here, $\krawL_{\pi}(\rho)$ denotes the value of the \kr coefficient of $\rho$ with respect to $\pi$ and, for a given $p^s$-th root of unity $\xi$, it is given by
    \begin{align}\label{equ:kraw_Lee}
        \krawL_{\pi}(\rho) \coloneqq 
            \sum_{t \in \compL} \Bigg( \prod_{i = 0}^M \binom{\pi_i}{t_{i0}, \ldots, t_{iM}} \kappa^{\mathsf{L}} \Bigg),
    \end{align}
    where
    \begin{align}
        \kappa^{\mathsf{L}} = 
        \begin{cases}
            \prod_{j = 1}^{M-1}\left(\xi^{-\trace(\alpha_i\alpha_j)} + \xi^{\trace(\alpha_i\alpha_j)}\right)^{t_{ij}}\xi^{\trace(\alpha_i\alpha_M)}   & \text{if } p=2, \\
            \prod_{j = 1}^M \left(\xi^{-\trace(\alpha_i\alpha_j)} + \xi^{\trace(\alpha_i\alpha_j)}\right)^{t_{ij}} &\text{otherwise.}\\    
        \end{cases}
    \end{align}
    For any $k\in \set{0,\dots,M}$, $\alpha_k$  denotes any element in $I_k^{\mathsf{L}}$ and $\binom{\pi_i}{t_{i0}, \ldots, t_{iM}} $ denotes the multinomial coefficient.
\end{corollary}
\begin{proof}
     Note that, for $a \in \PL_{\pi}$, the sum $\sum_{x \in \PL_{\rho}} \chi_a(x)$ runs over all $x \in \ring^n$ with given Lee decomposition $\rho$ and is only dependent on the Lee decomposition, but not on the composition of the tuples themselves. Therefore, without loss of generality, we assume that the first $\pi_0$ entries of $a$ have  weight $0$, the next $\pi_1$ entries equal to $\pm \alpha_1$. We then find $\pi_2$ entries equals to $\pm \alpha_2$ and so on. 
    In the first $\pi_0$ positions, an $n$-tuple $x\in \ring^n$ of Lee decomposition $\rho$  can have $t_{0j} < \rho_j$ positions lying in $j\in \set{0\dots, M}$, where $\sum_{j=0}^M t_{0j} = \pi_0$. Repeating the argument iteratively for each $i \in \set{0,\dots, M}$, we deduce that the number of $x$ satisfying these conditions is then given by the product of multinomial coefficients
    \begin{align}
        \prod_{i=0}^M \binom{\pi_i}{t_{i0}, \ldots, t_{iM}}.
    \end{align}
    The formula follows from the values of the \kr coefficients  with the multiplicities provided by the composition $t \in \compH$. 
\end{proof}
In particular, if $\ring = \Zps$, i.e., $r=1$, for all $i\in \set{0,\dots,M}$, the sets $I_i^\mathsf{L}$ reduce to $I_i^\mathsf{L}=\set{\pm i}$, $\trace = \mathrm{id}$, and MacWilliams-type identities for the  Lee metric codes hold. In this case, we obtain,
\begin{align}\label{equ:kraw_Leemetric}
    \kappa^{\mathsf{L}} =
    \begin{cases}
        \prod_{j = 1}^{M-1}\left(\xi^{-ij} + \xi^{ij}\right)^{t_{ij}}\xi^{iM}   &\!\!\!\! \text{if } p=2, \\
         \prod_{j = 1}^M\left(\xi^{-ij} + \xi^{ij}\right)^{t_{ij}}&\!\!\!\!\text{else.}\\    
    \end{cases}
\end{align}
A concrete example to illustrate the calculations of the identities for Lee-metric codes $\code \subseteq \Zpsn$ is given in  \ref{app:example_Lee}.\medskip

Compared to other partitions (like, for instance, partitioning a code $\code \subseteq \ring^n$ by the codewords of the same weight), the partition into codewords of the same Lee decomposition is rather fine. 
The finer a partition is, the more complex the enumerator and the identities become.
\begin{lemma}\label{lemma:coarsest_Lee}
    Given a finite chain ring $\ring$ and a code $\code \subseteq \ring^n$.
    The symmetrized Lee partition is the coarsest symmetrized Fourier-reflexive partition that allows retrieving the Lee weight enumerator of $\code$.
\end{lemma}

\begin{proof}
    Let $\mathcal{Q}_{\text{sym},n}$ be a symmetrized, Fourier-reflexive partition of $\ring^n$ induced by $\mathcal{Q} = J_0 \st \cdots \st J_N$ of $\ring$. Assume $\mathcal{Q}_{\text{sym},n}$ is coarser than $\mathcal{P}_{\mathrm{sym},n}^\mathsf{L}$. That is, its number of blocks is strictly smaller than the number of blocks of $\mathcal P_{\mathrm{sym},n}^\mathsf{L}$. This also implies that $\mathcal{Q}$ is coarser than $\mathcal{P}^{\mathsf{L}}$, i.e., $N < M$. Therefore, there exists a block $J_i$ of $\mathcal{Q}$ whose elements do not all share the same Lee weight. Hence, by definition of the symmetrized weight, there exist $x, \Tilde{x} \in \ring^n$ with $\LW(x) \neq \LW(\Tilde{x})$ but both contained in one block of $\mathcal{Q}_{\text{sym},n}$, which makes it impossible to recover the Lee weight enumerator.
\end{proof}

Note that the Lee partition is always finer than the partition induced by any  weight $\wt$ on $\ring$. This is because, for any weight $\wt$, the weight of each element must be equal to the weight of its opposite. Consequently, the blocks of the partition of $\ring^n$ induced by $\wt$ contain the blocks of the Lee partition.
Thus, for an $n$-tuple $x \in \ring^n$ we can compute its weight $\wt(x)$ using its Lee decomposition $\pi^{\mathsf{L}}(x)$, i.e.,
\begin{align}
    \wt(x) = \sum_{i = 0}^M \pi^{\mathsf{L}}_i(x) \wt(i). 
\end{align}
Hence, the MacWilliams identities for the symmetrized Lee partition are valid for any additive weight over a finite chain ring $\ring$. However, the symmetrized Lee partition is not always the most practical partition to choose. Indeed, for other metrics, there exist coarser partitions of the ambient space that still allow for  MacWilliams-type identities and allow us to recover the weight enumerator of the code. 

\subsection{Symmetrized Homogeneous-Unit Partition}
We now focus on the homogeneous metric over $\ring$, where we assume that $\ring$ has nilpotency index $s>1$. In fact, when $s=1$, $\ring$ is a finite field, and the homogeneous metric reduces to the Hamming metric for which MacWilliams identities exist. 

Similar to the Lee metric, $n$-tuples with a given homogeneous weight can be decomposed in different ways. 
A natural choice is the \emph{symmetrized homogeneous weight partition} induced by the homogeneous weight partition on $\ring$, classifying the elements $x \in \ring^n$ of given homogeneous weight. 
That is, we partition $\ring$ into blocks containing all elements of the same homogeneous weight, i.e.,
\begin{align}
    \mathcal{Q}^{\hom} = Q_0 \st Q_1 \st Q_\lambda \; \text{ where }\; Q_i = \set{a \in \ring \st \HomW(a) = i}.
\end{align}
Hence, we can define the \textit{symmetrized homogeneous weight partition} $\mathcal{Q}^{\hom}_{\mathrm{sym},n}$ of a code $\code \subseteq \ring^n$, partitioning $\code$ into the codewords of the same homogeneous decomposition. 
However, this partition does not lead to well-defined \kr coefficients and hence the MacWilliams identities fail \cite{Heide}.
One main reason is that, given an element $a$ of homogeneous weight $1$ and a $p^s$-th root of unity $\xi$, the sum
\begin{align}
    \sum_{\substack{x \in \ring \\ \HomW(x) = 1}} \xi^{\trace(ax)}
\end{align}
is not independent on $a$.
In fact, its value depends on whether $a$ and $x$ are units or zero divisors. Therefore, we refine it by distinguishing between zero components, units, nonzero components lying in the socle $\mathcal{S}=\ideal{\gamma^{s-1}}$ of $\ring$, and components lying in $\ideal{\gamma}$ but not in $\mathcal{S}$, i.e., we define the sets
\begin{gather}
    Z := \set{0}, \; U := \ring^\times, \; S := \mathcal{S} \setminus \set{0},\; \\
    \text{and}\;  R := \ring \setminus \left(Z \cup U \cup S \right) = \langle \gamma \rangle \setminus \langle \gamma^{s-1} \rangle.
\end{gather}
We observe that the cardinalities of the sets $Z, U, S$ and $R$, respectively, are given by
\begin{align}\label{eq:cardinalities_ZUSR}
    \card{Z} = 1, \; \card{U} = q^{s-1}(q-1), \; \card{S} = q-1, \; \text{and} \; \card{R} = q^{s-1} - q .
\end{align}
This immediately partitions $\ring$ into these four sets. We call this partition of $\ring$ the \textit{homogeneous weight-unit partition} and denote it by
\begin{align}\label{eq:homogeneous-R}
    \PH := Z\st U \st S \st R .
\end{align}

Let $\PHsym = \blockH_{\pi^{(0)}} \st \ldots \st \blockH_{\pi^{(D-1)}}$ denote the induced \textit{symmetrized homogeneous weight-unit partition} on $\ring^n$, where $\pi^{(i)} \in \decompHq$, for every $i \in \set{0, \ldots, D-1}$.
Furthermore, note that every $c \in \blockH_{\pi}$ has the same homogeneous weight given by $\HomW(c) =\pi_U+\pi_R+\frac{q}{q-1}\pi_S$. Hence, the homogeneous weight enumerator $\Homwe{\code}$ can immediately be derived from the symmetrized homogeneous weight-unit enumerator.

While some Fourier-reflexive partitions over $\ring$ naturally arise from a partition of orbits of a subgroup of its automorphism group (e.g., the symmetrized Lee weight partition), this is not generally true for the homogeneous-unit partition $\PH$. Recall that the linear isometries preserving the homogeneous weight are given by
$\mathrm{Aut}^{\hom}(\ring)=U^n \rtimes S_n$.
\begin{proposition}
    The homogeneous-unit partition $\PH$ can be seen as a partition given by the orbits of a subgroup $H \leq \mathrm{Aut}^{\hom}(\ring)$ if and only if $s \in \set{1, 2, 3}$.
\end{proposition} 
\begin{proof}
   In the case where $\PH$ coincides with the Hamming weight partition (that is, if $s = 1$), the partition is clearly induced by a partition of orbits. Additionally, if $s \in \set{2, 3}$, $\PH$ is given by a partition of the orbits given by the action of $\ring^\times$ on $\ring$.

    However, if $s \geq 4$, the situation changes. Indeed, in this case, the set $R$ is given by
    \begin{align}
        R = \left( \ideal{p} \cup \cdots \cup \ideal{p^{s-2}} \right) \setminus \ideal{p^{s-1}}.
    \end{align}
    If, by contradiction, $\PH$ does coincide with a partition of an orbit, there would exist an element $r\in R$ such that $R = \set{ru \st u \in U}$. Assume that $r \in \ideal{p^i}\setminus\ideal{p^{i+1}}$, for some $i \in \{1, \ldots, s-2\}$. Then we observe
    \begin{align}
        \set{ru\st u \in U} = \ideal{p^i}\setminus \ideal{p^{i+1}} \neq R,
    \end{align}
    which contradicts the assumption. Hence, $\PH$ cannot be represented as the partition of orbits.
\end{proof}

Although $\PH$ (and hence also $\PHsym$) is not generally representable as a partition of orbits, it is still a Fourier-reflexive partition. 

\begin{proposition}\label{prop:homogeneous-reflexive}
    The symmetrized homogeneous weight-unit partition $\PHsym$ is Fourier-reflexive.
\end{proposition}
\begin{proof}
    By \cite[Theorem 3.3]{gluesing2015fourier} it suffices to show that $\PH$, as defined in \eqref{eq:homogeneous-R}, contains $\set{0}$ as a block and is Fourier-reflexive. By the definition of $\PH$ the set $\set{0}$ is a block  of $\PH$. We now show that $\PH$ is Fourier-reflexive which holds if and only if $\PH$ and $\widehat{\PH}$ consist of the same number of blocks. To show that, we identify $\ring$ and $\widehat{\ring}$ via the isomorphism $a \longmapsto \chi_a$, and, for each $a\in \ring$  we compute the sums $\sum_{x \in J} \chi_a(x)$ for each $J \in \ZUSRset$. In what follows, we will set $a\in I$ for a fixed $ I\in \set{Z,U,S,R}$,

    It is easy to see that if $J = Z$, for $I \in \ZUSRset$ and any $a \in I$, we get
    \begin{align}
        \sum_{x \in  Z} \xi^{\trace(a x)} = \xi^{0} = 1.
    \end{align}
    
    Furthermore, note that if $I = Z$ we have $a = 0$. Thus, for every $J \in \ZUSRset$ we obtain
    \begin{align}
        \sum_{x \in  J} \xi^{\trace(a x)} = \sum_{y \in  J} \xi^{0} = \card{J},
    \end{align}
    where the cardinalities $\card{J}$ with $J \in \ZUSRset$ are given in \eqref{eq:cardinalities_ZUSR}.
    Similarly, for any $(I, J) \in \set{(S, S), (S, R), (R, S)}$ we have that the product $a x = 0$ where $a \in I$ and $x \in J$. Therefore, we obtain again
    \begin{align}
        \sum_{x_{k} \in  J} \xi^{\trace(a x_{k})} = \sum_{y \in  J} \xi^{0} = \card{J}.
    \end{align}
    
    Let us now consider $I = U$ the set of units and $a \in I$. For any $J \in \set{U, S, R}$ we obtain $a J = J$. Hence, applying Corollary \ref{cor:charactersum_ideal}, we get
    \begin{align}
        \sum_{x \in J} \xi^{\trace (a x)} 
            =
        \begin{cases}
            \sum_{y \in \ideal{\gamma^{s-1}} \setminus \set{0}} \xi^{\trace (y)} = -1 & \text{if } J = S,\\
            \sum_{y \in \ring} \xi^{\trace (y)} - \sum_{y \in \ideal{\gamma}} \xi^{\trace (y)} = 0 & \text{if } J = U,\\
            \sum_{y \in \ideal{\gamma}} \xi^{\trace (y)} - \sum_{y \in \ideal{\gamma^{s-1}}} \xi^{\trace (y)} = 0 &\text{if } J = R.
        \end{cases}
    \end{align}
    
    Instead, if $J = U$,  it remains to consider the cases $I = S$ and $I = R$. In the first case, for any $a \in I$, we have $a J = I$. However, as the sum runs over $J$, we observe every element in $a J = I$ exactly $\card{U}/\card{S} = q^{s-1}$ many times and hence,
    \begin{align}
        \sum_{x \in U} \xi^{\trace (a x)} = q^{s-1} \sum_{y \in S} \xi^{\trace (y)} = q^{s-1} \cdot  (-1) = -q^{s-1}.
    \end{align}
    In the latter case $I = R$, consider any $a \in I$. Then, $a J = \ideal{\gamma^i}\setminus \ideal{\gamma^{i+1}}$. Since $\sum_{y \in \ideal{\gamma^i}\setminus \ideal{\gamma^{i+1}}} \xi^{\trace(y)} = 0$, we also observe
    \begin{align}
        \sum_{x \in J} \xi^{\trace (a x)} = 0.
    \end{align}
    
    Lastly, let us consider the case where $I = J = R$. If $a \in \langle \gamma^i \rangle \setminus \langle \gamma^{i+1} \rangle$, then $aR = (\langle \gamma^{i+1} \rangle \setminus \{0\})^w \times (\langle \gamma^{i+1}\rangle)^v$, for some positive integers $v$ and $w$, depending on whether the product $a x$ can be zero. Note that there are $|\langle \gamma^{s-i}  \rangle\setminus\langle\gamma^{s-1} \rangle| = q^i-q$ many  $x \in R$ which make the product zero. Thus, the ideal $\langle \gamma^{i+1} \rangle$ of size $q^{s-i-1}$ is seen $v=q^i-q$ many times and we are left with $|R| - (q^i-q) q^{s-i-1} = q^{s-i}-q$ many $x \in R$, which see $\langle \gamma^{i+1} \rangle \setminus \{0\}$ exactly  $w= (q^{s-i}-q)/(q^{s-i-1}-1) =q$ times. Thus, we get that
    \begin{align}
        \sum_{x \in R} \xi^{\trace{(x a)}} 
            &= 
        (q^i-q) \sum_{y \in \langle \gamma^{i+1}\rangle } \xi^{\trace{(y)}} + q \sum_{y \in \langle \gamma^{i+1} \rangle \setminus \{0\}} \xi^{\trace{(y)}}\\
            &= 
        (q^i-q) \cdot 0 + q \cdot (-1) \\
            &= -q. 
    \end{align}
    
    Table \ref{tab:krhom} gives an overview of the values of the sum $\sum_{x \in J} \chi_{a}(x)$ for all $I, J \in \ZUSRset$ and $a \in I$. The rows of the table are indexed by the blocks of $\widehat{\PH}$ and the columns by the blocks of $\PH$, showing that $\PH$ and $\widehat{\PH} $ contain the same number of blocks, which completes the proof.
    
    \begin{table}
        \caption{Values of the expression $\sum_{x \in J} \xi^{\trace(a\cdot x)}$ for any $I, J \in \ZUSRset$. }\label{tab:krhom}
            \centering
            \begin{tabular}{c|cccc}
                \backslashbox{$I$}{$J$} & $Z$ & $U$ & $S$ & $R$ \\
                \hline
                \\
                $Z$& 1 & $q^{s-1}(q-1)$ & $q-1$ & $q^{s-1}-q$ \\
                $U$& 1 & 0 & $-1$ & 0 \\
                $S$& 1 & $-q^{s-1}$ & $q-1$ &$q^{s-1}-q$  \\
                $R$& 1 & 0 & $q-1$ & $-q$ \\
            \end{tabular}
    \end{table}
\end{proof}

By \cite[Theorem 3.5]{gluesing2015fourier} the MacWilliams-type identities exist for the symmetrized homogeneous weight-unit enumerator and can be summarized as follows.

\begin{corollary}[MacWilliams-type Identities for the Symmetrized Homogeneous Weight-unit Enumerator]\label{thm:MacWilliams_Hom}
    Let $\ring$ be a finite chain ring with residue field size $q=p^r$ and nilpotency index $s$.
    Consider a linear code $\code \subseteq \ring^n$ and let $\dualcode$ denote its dual. For a given homogeneous weight-unit decomposition $\rho \in \decompHq$, the number of codewords in the dual $\dualcode$ of homogeneous weight-unit decomposition $\rho$ is given by
    \begin{align}\label{eq:MacWilliams_Hom}
        \enumH{\rho}(\dualcode) = \frac{1}{\card{\code}} \sum_{\pi \in \decompHq} \krawH_{\pi}(\rho) \enumH{\pi}(\code).
    \end{align}
    Here, $\krawH_{\pi}(\rho)$ denotes the value of the \kr coefficient of $\rho$ with respect to $\pi$, and for a given $p^s$-th root of unity $\xi$  is given by
    \begin{align}
        \krawH_{\pi}(\rho)=\sum_{t \in \compH} \Bigg(  & \prod_{i \in \ZUSRset } \binom{\pi_i}{t_{iZ}, \ldots, t_{iR}} \Bigg) \kappa^{\mathsf{hom}},
    \end{align}
    where, given that $t_{UU}=t_{UR} = t_{RU} = 0$, we have
    \begin{align}
        \kappa^{\mathsf{hom}} = &(-1)^{t_{US}} (-q^{s-1})^{t_{SU}} \left(q^{s-1}(q-1)\right)^{t_{ZU}}
        \left(q^{s-1} -q\right)^{t_{ZR}} (q-1)^{t_{ZS}+t_{SS}+t_{SR}+t_{RS}} (-q)^{t_{RR}}.
    \end{align}
\end{corollary}
The proof follows a similar arguments as the proof of Corollary \ref{thm:MacWilliams_Lee}.

In  \ref{app:example_Hom}, we give an example to get an idea of the formula stated above.
Finally, note that the homogeneous unit-weight partition defines the coarsest partition that allows us to obtain MacWilliams-type identities for the homogeneous metric.
\begin{lemma}\label{lemma:coarsest_Hom}
    Given a finite chain ring $\ring$ and a code $\code \subseteq \ring^n$.
    The symmetrized homogeneous weight-unit partition $\PHsym$ is the coarsest symmetrized partition which is Fourier-reflexive and allows to recover the homogeneous weight enumerator $\code$.
\end{lemma}
\begin{proof}
    Let us assume there exists partition $\mathcal{Q}_{\text{sym},n}$ of $\ring^n$ which is coarser than $\PHsym$. Hence, the underlying partition $\mathcal{Q}$ of $\ring$ consists of $3$ or fewer blocks. If $\mathcal{Q}$ is not the homogeneous weight partition, by similar arguments as in the proof of Lemma \ref{lemma:coarsest_Lee}, the homogeneous weight enumerator cannot be recovered. On the other hand, if $\mathcal{Q}$ is the homogeneous weight partition, then $\mathcal{Q}_{\text{sym},n}$ is not Fourier-reflexive in general (see Theorem \cite{Heide}).
\end{proof}

\subsection{Symmetrized Subfield-Trace Partition}
Let us focus on the subfield metric defined over the finite field $\field_{p^r}$. Since $\field_{p^r}$ is a finite chain ring with $s=1$ and the subfield weight is additive, the {symmetrized Lee partition} allows us to obtain MacWilliams-type identities for the subfield weight. However, this is not the coarsest partition we can choose. Again, a natural choice is to consider the subfield decomposition of the codewords defined as follows:
given a code $\code \subseteq \field_{p^r}^n$ the \textit{subfield decomposition} of a codeword $c \in \code$ is a triple $(s_0(c), s_1(c), s_{\lambda}(c))$, where for each $i \in \set{0, 1, \lambda}$ we define
\begin{align}
    s_i(c) := \card{\set{j \in \{ 1, \ldots, n\} \st \SubW(c_j) = i}}.
\end{align}
We can then partition the code into blocks containing the codewords with the same subfield weight decomposition. The sizes of the blocks determine the  \textit{ symmetrized subfield decomposition enumerator} of a code $\code$ and we denote it by $\SDe{\code}$. 

\begin{example}\label{ex:counterexample_subfield}
    Consider the finite field $\field_8 = \field_2(\alpha)$ where $ \alpha^3 = \alpha + 1$ and the two codes $\code_1 = \ideal{\begin{pmatrix} 1, \alpha, \alpha^2 \end{pmatrix}}$ and $\code_2 = \ideal{\begin{pmatrix} \alpha^2 , 1, \alpha^2+1 \end{pmatrix}}$. We denote by $\dualcode_1$ and $\dualcode_2$ the dual of $\code_1$ and $\code_2$, respectively. Table \ref{tab:counterexample_subfield} then shows that  $\SDe{\code_1} = \SDe{\code_2}$, but $\SDe{\dualcode_1} \neq \SDe{\dualcode_2}$.
    \begin{table}
    \caption{Subfield weight enumerator for the codes $\code_1, \code_2$ and their respective duals $\dualcode_1$ and $\dualcode_2$.}\label{tab:counterexample_subfield}
        \centering
        \begin{tabular}{c|c|c|c|c}
            subfield weight decomp. & $\SDe{\code_1}$ & $\SDe{\code_2}$ & $\SDe{\dualcode_1}$ & $\SDe{\dualcode_2}$  \\
            \hline
            (3, 0, 0) & 1 & 1 & 1 & 1 \\
            (0, 3, 0) & 0 & 0 & 0 & 1 \\
            (0, 0, 3) & 4 & 4 & 27 & 26\\
            (2, 1, 0) & 0 & 0 & 0 & 0 \\
            (2, 0, 1) & 0 & 0 & 0 & 0 \\
            (1, 2, 0) & 0 & 0 & 0 & 0 \\
            (1, 0, 2) & 0 & 0 & 15 & 15 \\
            (0, 2, 1) & 0 & 0 & 3 & 0 \\
            (0, 1, 2) & 3 & 3 & 12 & 15 \\
            (1, 1, 1) & 0 & 0 & 6 & 6 
        \end{tabular}
    \end{table} 
\end{example}
Example \ref{ex:counterexample_subfield} also shows that partitioning a code $\code$ into codewords of the same subfield weight does not yield to MacWilliams identities. Indeed, for $\lambda > 1$, Example \ref{ex:counterexample_subfield} shows that the subfield weight enumerators of $\code_1$ and $\code_2$ coincide (i.e.,  $\Swe{\code_1} = \Swe{\code_2}$), whereas the subfield weight enumerators of their duals are different (i.e, $\Swe{\dualcode_1} \neq \Swe{\dualcode_2}$).

The symmetrized subfield decomposition defined above is the coarsest partition that allows to recover the subfield weight of the elements of a code. As seen in Example \ref{ex:counterexample_subfield}, it is not Fourier-reflexive. However, we can define a Fourier-reflexive partition consisting of only four blocks. More explicitly, with an eye on Example \ref{ex:char}, we will distinguish the elements in $\Fprn$ not only by their subfield weight, but also by their trace.
As we still want to be able to recover the subfield weight of a vector, we sub-partition the elements of subfield weight $\lambda$ (i.e., the elements in $\field_{p^r}\setminus \Fp$) into two sets: the first one contains all such elements with trace equal to $0$ and the second one contains all remaining elements. That is, we partition  $\field_{p^r}$ as $\PS= \IS_0 \st \IS_1 \st \IS_2 \st \IS_3$,  where
\begin{gather}\label{eq:subfield-trace_Fpr}
    \IS_0 := \set{0}, \quad \IS_1 := \Fp^{\times}, \quad \IS_2 := \set{x \in \Fpr \setminus \Fp \st \trace(x) = 0}\quad \text{and} \\ \IS_3 := \set{x \in \Fpr \setminus \Fp \st \trace(x) \neq 0}.
\end{gather}
We call $\PS$ the \textit{subfield-trace} partition of $\field_{p^r}$. Recall 
that for each $x \in \Fp$ we have $\trace(x) = rx \mod p$ and that the trace function is uniformly distributed among $\Fp$. Hence, the following is a simple consequence.
\begin{corollary}\label{cor:cardinalities_subfield}
    Let $\Fprn$ be a finite field and $p$ a prime number. If $r \centernot\mid p$, we have that $| \IS_2 | = p^{r-1} - 1$ and $| \IS_3 | = (p^{r-1} - 1)(p-1)$. On the other hand, if $r \st p$, it holds that $| \IS_2 | = p^{r-1} - p$ and $| \IS_3 | = p^{r-1}(p-1)$.
\end{corollary}

Let $\decompSq = \set{\pi^{(0)}, \ldots, \pi^{(D-1)}}$ denote the set of $\PS$-decompositions. We denote the \textit{induced symmetrized subfield-trace partition} $\PSsym$ of $\Fprn$ by
\begin{align}\label{eq:subfield-trace_partition}
   \PSsym := \blockS_{\pi^{(0)}} \st \cdots \st \blockS_{\pi^{(D-1)}}.
\end{align}
Note also here that every codeword $c \in \code$ with $c \in \blockS_{\pi}$ has the same subfield weight which is calculated by $\wt_{\lambda}(c) = \pi_1 + (\pi_2 + \pi_3)\lambda$. Thus, the subfield-weight enumerator $\SDe{\code}$ of a code $\code \subseteq \Fprn$ can immediately be derived from its subfield-trace enumerator.

\begin{proposition}\label{prop:reflexive-subfield}
    The symmetrized subfield-trace partition $\PSsym$ is Fourier-reflexive.
\end{proposition}
\begin{proof}
    Similar to the proof of Proposition \ref{prop:homogeneous-reflexive} we show that the subfield-trace partition $\PS$ is Fourier-reflexive by calculating the sums $\sum_{x \in \IS_j} \chi_a(x)$, for each $a \in \ring$ and each $j \in \set{0, \ldots, 3}$. 
    
    For $a = 0$ and for every $j \in \set{0, \ldots, 3}$ we observe that
    \begin{align}
        \sum_{x \in \IS_j} \chi_a(x) = |\IS_j|,
    \end{align}
    where the values of $\card{\IS_j}$, for $j = 2, 3$, can be found in Corollary \ref{cor:cardinalities_subfield}.
    Similarly, for any $i \in \set{0, \ldots, 3}$ and $a \in \IS_i$ we have 
    \begin{align}
        \sum_{x \in \IS_0} \chi_a(x) = 1.
    \end{align}
    For the remaining cases, we make use of Lemmas \ref{lemma:schur_code} and \ref{cor:charactersum_ideal} and we identify whether the character $\chi_a(x)$ is  generating for the subgroup $ \field_p\subset\field_{p^r}$. Recall that, a character $\chi_a$ of $\field_{p^r}$ is generating for the subgroup $H $ if $H$ is not contained in $\ker( \chi_a)$.
    Let us  first consider $a \in \IS_1$, that is $a\in  \field_p^\times$. By $\field_{p}$-linearity of the trace function, we have $\trace(ax) = a\trace(x)$ for any $x \in \field_{p^r}$. If $r \st p$, for any $x \in \field_{p}$ we have $\trace(x) = 0$, i.e., $\chi_a$ is non-generating for $\field_{p}$. Hence, by Lemma \ref{lemma:schur_code}, $\sum_{x \in \field_{p}} \chi_a(x) = p$. Therefore,
    \begin{align}
        \sum_{x \in \field_{p}^{\times}} \chi_a(x) = p - 1.
    \end{align}
    On the other hand, if $r \centernot\mid p$, then $\chi_a$ is generating for $\field_{p}$ and 
    \begin{align}
        \sum_{x \in \field_{p}^{\times}}\chi_a(x) = -1.
    \end{align}
    Furthermore, it is easy to see that for $r \st p$ we have that  $\IS_2 \cup \field_{p}$ (respectively $\IS_2 \cup \set{0}$ if $r \centernot\mid p$) is a subgroup of $\field_{p^r}$ and for each $x \in \IS_2$ it holds that $\trace(x) = 0$. Thus, given $a\in \field_p$, from Corollary \ref{cor:cardinalities_subfield} follows
    \begin{align}
        \sum_{x \in \IS_2}\chi_a(x) = 
        \begin{cases}
            \sum_{x \in \IS_2 \cup \field_{p}} \chi_a(x) - \sum_{x \in \field_{p}} \chi_a(x)= |\IS_2| = p^{r-1}-p & \text{if } r \st p,\\
            \sum_{x \in \IS_2 \cup \set{0}} \chi_a(x) - \chi_a(0) = |\IS_2| = p^{r-1}-1 & \text{otherwise}.
        \end{cases}
    \end{align}
    Now, let   $a \in \IS_2$, i.e., we have $\trace(a) = 0$. From the  $\field_{p}$-linearity of the trace,  we have that  $\trace(ax) = \trace(a)x = 0$ for any $a \in \IS_2, x\in \field_p^\times$ and hence, $\chi_a$ is  non-generating for $\field_{p}$ for any $a$ . Then, by Lemma \ref{lemma:schur_code}, we get that
    \begin{align}
        \sum_{x \in \field_{p}^{\times}} \chi_a(x) = \sum_{x \in \field_{p}} \chi_a(x) - \chi_a(0) = p-1.
    \end{align}
    We now compute $\sum_{x\in \IS_2} \chi_a(x)$. Recall that $\IS_2 \cup \field_{p}$ (respectively $\IS_2 \cup \set{0}$) is a subgroup of $\field_{p^r}$ if $r\st p$ (respectively $r \centernot\mid p$). Since  $\IS_2$ is not closed under multiplication, $\chi_a$ is generating over $\IS_2\cup\field_{p}$ for $r \st p$ (respectively over $\IS_2\cup\set{0}$ for $r \centernot\mid p$) and thus,
    \begin{align}
        \sum_{x \in \IS_2}\chi_a(x) = 
        \begin{cases}
            \sum_{x \in \IS_2 \cup \field_{p}} \chi_a(x) - \sum_{x \in \field_{p}} \chi_a(x) =  -p & \text{if } r \st p,\\
            \sum_{x \in \IS_2 \cup \set{0}} \chi_a(x) - \chi_a(0)  = -1& \text{otherwise}.
        \end{cases}
    \end{align}
    Finally, we assume that $a \in \IS_3$. Note that, if $x=1 \in \field_{p}$, then  $\trace(ax) = \trace(a) \neq 0$. Hence, $\chi_a$ is generating  for $\field_{p}$ over $\field_{p^r}$ which implies (using Corollary \ref{cor:charactersum_ideal}) that
    \begin{align}
        \sum_{x \in \field_{p}^{\times}} \chi_a(x) = -1.
    \end{align}
   Similarly to the previous cases, we   obtain
    \begin{align}
        \sum_{x \in \IS_2}\chi_a(x) = 
        \begin{cases}
            \sum_{x \in \IS_2 \cup \field_{p}} \chi_a(x) - \sum_{x \in \field_{p}} \chi_a(x) = 0 & \text{if } r \st p,\\
            \sum_{x \in \IS_2 \cup \set{0}} \chi_a(x) - \chi_a(0) =  -1 & \text{otherwise}.     
        \end{cases}
    \end{align}
    Observing that $\sum_{x \in \IS_3} \chi_a(x) = \sum_{x \in \field_{p^r}} \chi_a(x) - \sum_{x \in \field_{p}} \chi_a(x) - \sum_{x \in \IS_2} \chi_a(x)$, yields the last cases for each $a$.
    
    The values of $\sum_{x \in \IS_j} \chi_a(x)$ are summarized in Table \ref{tab:kraw_subfield_r-divides-p} for $r\st p$, respectively in Table \ref{tab:kraw_subfield_r-NOTdivides-p} whenever $r \centernot\mid p$. In either cases we again interpret the rows of the table as the blocks of $\widehat{\PS}$ and columns as the blocks of $\PS$. Hence, the number of blocks of $\widehat{\PS}$ equals the number of blocks of $\PS$. Thus, $\PS$ is Fourier-reflexive. Additionally, the set $\set{0}$ is, by definition, a block of $\PS$. Hence, by \cite[Theorem 3.3]{gluesing2015fourier}, $\PSsym$ is Fourier-reflexive too.
    \begin{table}[H]
        \centering
        \begin{tabular}{c|cccc}
                & $0$ & $1$ & $2$ & $3$ \\
            \hline
            $0$ & $1$ & $1$ & $1$ & $1$ \\
            $1$ & $p-1$ & $p-1$ & $p-1$ & $-1$ \\
            $2$ & $p^{r-1}-p$ & $p^{r-1}-p$ & $-p$ & $0$ \\
            $3$ & $p^{r-1}(p-1)$ & $-p^{r-1}$ & $0$ & $0$ \\
        \end{tabular}
        \caption{Values of $\sum_{x \in \IS_j} \chi_a$ for $a \in \IS_i$, $i, j \in \set{0, \ldots, 3}$ and $r \st p$.}
        \label{tab:kraw_subfield_r-divides-p}
    \end{table}
    \begin{table}[H]
        \centering
        \begin{tabular}{c|cccc}
                & $0$ & $1$ & $2$ & $3$ \\
            \hline
            $0$ & $1$ & $1$ & $1$ & $1$ \\
            $1$ & $p-1$ & $-1$ & $p-1$ & $-1$ \\
            $2$ & $p^{r-1}-1$ & $p^{r-1}-1$ & $-1$ & $-1$ \\
            $3$ & $(p^{r-1}-1)(p-1)$ & $-(p^{r-1}-1)$ & $1-p$ & $1$ \\
        \end{tabular}
        \caption{Values of $\sum_{x \in \IS_j} \chi_a$ for $a \in \IS_i$, $i, j \in \set{0, \ldots, 3}$ and $r \centernot\mid p$.}
        \label{tab:kraw_subfield_r-NOTdivides-p}
    \end{table}
\end{proof}
For some cases, this partition is induced by partitions of an orbit. However, as we could not verify this for all instances, we leave it as an open question.

We are now able to state the MacWilliams identities for the subfield-trace enumerator whose existence is given by \cite[Theorem 3.5]{gluesing2015fourier}.
\begin{corollary}[MacWilliams-type Identities for the Symmetrized Subfield-Trace Enumerator]\label{thm:MacWilliams_subfield}
    Consider a linear code $\code \subseteq \Fprn$ and let $\dualcode$ denote its dual. For a given $\rho \in \decompSq$, the number of codewords in the dual $\dualcode$ of subfield-trace decomposition $\rho$ is given by
    \begin{align}
        \enumS{\rho}(\dualcode) = \frac{1}{\card{\code}} \sum_{\pi \in \decompSq}\krawS_{\pi}(\rho) \enumS{\pi}(\code).
    \end{align}
    Here $\krawS_{\pi}(\rho)$ denotes the value of the Krawtchouk coefficient of $\rho$ with respect to $\pi$ which, for a given $p$-th root of unity $\xi$, is defined as
    \begin{align}
        \krawS_{\pi}(\rho) = \sum_{t \in \compS}\left( \prod_{i = 0}^3 \binom{\pi_i}{t_{i0}, \ldots, t_{i3}} \right) \cdot \kappa^{\lambda},
    \end{align}
    where $\kappa^{\lambda}$ takes the values
    \begin{gather}
            (p-1)^{t_{01} + t_{11} + t_{21} + t_{03}} (-1)^{t_{31}} (-p)^{t_{22}} (p^{r-1}-p)^{t_{02} + t_{12}} (-p^{r-1})^{t_{13}} (p^{r-1})^{t_{03}}, \text{ and } \\
            (p-1)^{t_{01} + t_{03}} (-1)^{t_{11} + t_{22} + t_{33} + t_{32}} (1-p)^{t_{23}} (p^{r-1}-1)^{t_{02} + t_{03} +t_{12}+t_{21}} (1- p^{r-1})^{t_{13}},
    \end{gather}
    if $r \st p$ and $r \not \; \mid p$, respectively.
\end{corollary} 
\begin{proof}
    The multinomial appearing in the expression of the \kr coefficient $\krawS_{\pi}(\rho)$ is derived as in the proof of Corollary \ref{thm:MacWilliams_Hom}. The formula follows using the values of Tables \ref{tab:kraw_subfield_r-divides-p} or \ref{tab:kraw_subfield_r-NOTdivides-p}, respectively, with the multiplicities provided by the composition $t \in \compS$. 
\end{proof}
An illustrative example can be found in  \ref{app:example_Sub}.\medskip

Lastly, we want to highlight that the symmetrized subfield-trace partition (see \eqref{eq:subfield-trace_partition}) is indeed the coarsest partition that allows for the MacWilliams identities to exist and to recover the subfield weight enumerator. The proof follows similar arguments to the proof of Lemma \ref{lemma:coarsest_Hom} and is therefore omitted.
\begin{lemma}\label{lemma:coarsest_subfield}
    Given a finite field $\Fpr$ and a code $\code \subseteq \Fpr^n$.
    The symmetrized subfield-trace partition is the coarsest symmetrized partition which is Fourier-reflexive and allows to recover the subfield-weight enumerator of $\code$.
\end{lemma}

\section{Linear Programming Bound}\label{sec:lpbound}
One of the tightest bounds for the size of linear codes is given by the Linear Programming (LP) bound. Such bounds have been introduced by Delsarte \cite{delsarte1998association} and usually stem from association schemes. However, given the MacWilliams identities, one can easily derive an LP bound. For the homogeneous and the subfield metric no such LP bound has been stated. An exception is the Lee metric, where an association scheme using the Lee decomposition has been employed to derive a Lee-metric LP bound \cite{astola1982leescheme,patrick1986lee}. For the homogeneous metric over a Frobenius ring a different approach to derive an LP bound has been presented in \cite{byrne2007linear}. 
In this Section, we derive LP bounds stemming from the MacWilliams identities presented in Section \ref{sec:new_macwilliam}.

Given a finite chain ring $\ring$, an additive weight $\weight: \ring^n \to \QQ$ and an $\ring$-linear code $\code$, recall the definition of the \emph{decomposition enumerator} of $\code$ 
\begin{align}
    \enumW{\pi}(\code) = \card{\set{c \in \code \st \pi^{\wt}(c) = \pi}}
\end{align}
with respect to the weight $\wt$. Moreover, if $\card{\decompWq}=D$, we denote by $\wtdec{0}=(n,0,\dots,0)$ the decomposition of the all-zero vector, and all the other decompositions in $\decompWq$ by $\wtdec{1},\dots,\wtdec{D-1}$. 

The MacWilliams-type identities for the Lee decomposition enumerator, given in Corollary \ref{thm:MacWilliams_Lee}, allow us to straightforwardly derive an LP bound for any $\ring$-linear code and any additive weight. Moreover, the MacWilliams-type identities given in \Cref{thm:MacWilliams_Hom} and \Cref{thm:MacWilliams_subfield} allow to derive an LP bound for the homogeneous and subfield metric respectively,  which is less complex (i.e., that involves fewer variables).

From now on,  $ B_{\ring}(n,\dist_{\wt}) $ denotes the maximum number of codewords of an $\ring$-linear code of  length $ n$ and minimum $\weight$-distance $\dist_{\wt}$.
\begin{theorem}[Linear Programming Bound]\label{thm:lp_bound}
    For a linear code $\code \subseteq \ring^n$ of minimum distance $\dist_{\wt}$ it holds that
    \begin{align}
        B_{\ring}(n,\dist_{\wt})\le \max \left( \sum_{i=0}^{D-1} \enumW{\wtdec{i}}(\code)\right),
    \end{align}
    where the maximum is taken over all $\enumW{\wtdec{i}}(\code)$ subject to the following conditions: 
    \begin{itemize}
        \item $\enumW{\wtdec{0}}(\code)=1$;
        \item $\enumW{\wtdec{i}}(\code)\ge 0$ for all $1\le i\le D-1$ and $\enumW{\wtdec{i}}(\code)=0$ for all $i \in I $;
        \item 
           $ \sum_{i=1}^{D-1} \enumW{\wtdec i}(\code) K_{\wtdec j}(\wtdec i)\ge 0$,
    \end{itemize}
    where $I=\{i \ \st \  x \in \ring^n \text{ such that }\pi(x)=\wtdec i \text{ and } \weight(x)<\dist_{\wt}\}$.
\end{theorem}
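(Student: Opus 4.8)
The plan is to read the statement as the linear-programming relaxation of the cardinality $\card{\code}$, exactly as in the classical Delsarte argument, but with the decomposition partition in place of the Hamming partition. The first observation is that the relevant decomposition partition (the Lee, homogeneous weight-unit, or subfield-orbit partition, according to the metric) partitions the ambient space and therefore splits $\code$ into the disjoint blocks $\PW_{\wtdec i}\cap\code$. Consequently
\begin{align}
    \card{\code} = \sum_{i=0}^{D-1} \enumW{\wtdec i}(\code),
\end{align}
so the objective function, evaluated at the genuine decomposition enumerator of $\code$, equals $\card{\code}$. I would then show that this genuine enumerator is a feasible point for the program; since the maximum is taken over the larger, relaxed (real, nonnegative) feasible region, it is at least $\card{\code}$, and maximizing $\card{\code}$ over all admissible codes gives the bound on $B_{p,r,s,n}(n, d_{\wt})$.

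First I would check the three constraint families for an arbitrary linear $\code \subseteq \ring^n$ with minimum $\weight$-distance at least $d_{\wt}$. The equality $\enumW{\wtdec 0}(\code) = 1$ holds because $\wtdec 0 = (n, 0, \dots, 0)$ forces every coordinate into the weight-zero block $\set{0}$, so the all-zero vector is the unique codeword with that decomposition. Nonnegativity of the $\enumW{\wtdec i}(\code)$ is automatic, being cardinalities. For the vanishing on $I$, recall that for a linear code the minimum distance equals the minimum weight of a nonzero codeword, and that all vectors sharing a decomposition $\wtdec i$ have the same, well-defined weight $\weight(\wtdec i)$ (the $\weight$-weight of the decomposition introduced earlier); hence a nonzero decomposition of weight below $d_{\wt}$ is carried by no codeword, giving $\enumW{\wtdec i}(\code) = 0$ for $i \in I$.

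The third family is precisely the nonnegativity of the \emph{dual} decomposition enumerators. By the appropriate MacWilliams-type identity (Theorem \ref{thm:MacWilliams_Lee}, \ref{thm:MacWilliams_Hom} or \ref{thm:MacWilliams_subfield}), for every decomposition $\wtdec j$ one has
\begin{align}
    \enumW{\wtdec j}(\dualcode) = \frac{1}{\card{\code}} \sum_{i=0}^{D-1} K_{\wtdec i}(\wtdec j)\, \enumW{\wtdec i}(\code) \ \geq\ 0,
\end{align}
since the left-hand side counts codewords of $\dualcode$. As $\card{\code} > 0$, clearing the positive denominator turns this into the stated linear inequalities in the primal unknowns. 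This establishes feasibility of the true enumerator, whence $\card{\code} \leq \max\big( \sum_{i=0}^{D-1} \enumW{\wtdec i} \big)$ over the program; as the right-hand side depends only on $n$, $d_{\wt}$ and the fixed Krawtchouk coefficients (not on the particular code), taking the maximum of $\card{\code}$ over all admissible codes yields the claim.

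Everything here is the routine LP-relaxation bookkeeping; the one genuinely load-bearing step is the third, where the dual count must be correctly transcribed into homogeneous linear constraints through the MacWilliams-type identity, using $\card{\code} > 0$ to pass from $\enumW{\wtdec j}(\dualcode) \geq 0$ to the inequality on the $\enumW{\wtdec i}(\code)$. The remaining point to keep in mind is the deliberate abuse of notation: the symbols $\enumW{\wtdec i}(\code)$ serve both as the concrete counts for a given code and as the free variables of the program, and the maximum must be understood over all nonnegative real vectors meeting the three constraint families --- a relaxation that can only enlarge the optimum and so preserves the direction of the inequality.
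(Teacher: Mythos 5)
Your proof is correct and is precisely the derivation the paper intends: the paper states this theorem without a separate proof, remarking only that it follows ``straightforwardly'' from the MacWilliams-type identities, and your verification --- objective equal to $\card{\code}$, the zero decomposition contributing $1$, nonzero decompositions of weight below $d_{\wt}$ being empty by linearity of $\code$, and the Krawtchouk constraints being nonnegativity of the dual decomposition enumerator after clearing the positive factor $1/\card{\code}$ --- is exactly that routine Delsarte relaxation made explicit. One minor observation: your (correct) dual-count inequality keeps the $i=0$ term inside the sum, whereas the theorem's third bullet starts at $i=1$ with right-hand side $0$; your form is the faithful consequence of the identities, and it is also what the paper itself uses in the refined Lee-metric LP bound, where the $i=0$ term reappears as the explicit constant $-\binom{n}{\wtdec{i}_1,\ldots,\wtdec{i}_M}2^{n-\wtdec{i}_0}$ on the right-hand side.
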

If $\wt$ is an additive weight of $\ring^n$ and $\enumW{\pi}(\code)$ is the Lee partition enumerator, the LP bound  requires a large number of variables. Thus, the program cannot run efficiently and is not computationally feasible.

\subsection{Lee metric}
The Lee scheme, introduced in \cite{astola1982leescheme} and  investigated in \cite{astola2015sharpening,astola2016leeLPbound,patrick1986lee}, provides a framework for studying the maximum size of a linear Lee-metric code. In the following we will refine the Lee-metric  LP bound \Cref{thm:lp_bound}. 

Throughout this section, let $\ring=\Zpsn$ and let $M$ be the maximum value of the Lee-weight in $\Zps$. 
Given a $\Zps$-linear code $\code$ of length $n$, let $c\in \code$ denote a codeword. In the Hamming metric, whenever we multiply each component of $c$ by a unit $r \in \units{p^s}$, we obtain a codeword $r\cdot c$ of the same Hamming weight as $c$. This property does not hold in the Lee metric. However, the Lee decompositions of $c$ and $r\cdot c$ are related.
Indeed, the Lee decomposition of $r \cdot c$ is a permutation of the Lee decomposition of $c$. That is, if $\piL(c) = (\piL_0, \dots, \piL_{M})$ is the Lee decomposition of $c$, then the Lee decomposition of $r \cdot c$ is given by 
\begin{align}
    \piL(r \cdot c) = (\piL_0, \piL_{{\sigma_r}(1)}, \dots, \piL_{\sigma_r(M-1)}) =: \sigma(\piL(r\cdot c)),
\end{align}
where $\sigma_r(i) = k$ such that $rk \equiv i \pmod{p^s}$.

\begin{lemma}[\text{\cite[Lemma 1]{astola2014linear}}]
    Given a Lee decomposition $\pi \in \decompLq$, a unit element $r\in \units{p^s}$  and $\sigma_r$ a permutation on the set $\set{1, \ldots, M}$, we define $\sigma_r(\pi):=(\pi_0, \pi_{{\sigma_r}(1)}, \dots, \pi_{\sigma_r(M-1)})$. Then, for any linear code $\code \subseteq \Zpsn$, it holds that $\enumL{\pi}(\code)=\enumL{\sigma(\pi)}(\code)$.
\end{lemma}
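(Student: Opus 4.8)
The plan is to realize the equality $\enumL{\pi}(\code) = \enumL{\sigma_r(\pi)}(\code)$ as coming from an explicit bijection on $\code$. Consider the map $\phi_r \colon \code \to \code$, $c \mapsto r \cdot c$. Since $r \in \units{p^s}$ is a unit and $\code$ is $\Zps$-linear, the map $\phi_r$ is well-defined (it sends codewords to codewords) and bijective, with inverse $\phi_{r^{-1}}$ given by multiplication by $r^{-1}$. First I would show that $\phi_r$ carries the fiber $\PW_\pi \cap \code = \set{c \in \code \st \piL(c) = \pi}$ into the fiber $\PW_{\sigma_r(\pi)} \cap \code$. Since $\phi_{r^{-1}}$ symmetrically carries $\PW_{\sigma_r(\pi)} \cap \code$ back into $\PW_{\pi} \cap \code$ (using that $\sigma_{r^{-1}} = \sigma_r^{-1}$), the restriction of $\phi_r$ is a bijection between the two fibers, which immediately yields the claimed equality of their cardinalities.

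The core step is thus to verify that $\piL(r \cdot c) = \sigma_r(\piL(c))$. The key structural fact is that multiplication by the unit $r$ is a bijection of $\Zps$ that commutes with negation, i.e. $r \cdot (-x) = -(r \cdot x)$. Consequently, for each block $I^{\mathsf{L}}_i = \set{\alpha_i, -\alpha_i}$ of the Lee partition, the image $r \cdot I^{\mathsf{L}}_i = \set{r\alpha_i, -r\alpha_i}$ is again closed under negation and has the same cardinality, hence equals a single Lee block. Therefore multiplication by $r$ permutes the blocks $I^{\mathsf{L}}_0, \dots, I^{\mathsf{L}}_M$, fixing $I^{\mathsf{L}}_0 = \set{0}$; this permutation of block indices is exactly (the inverse of) $\sigma_r$, in agreement with the defining relation $\sigma_r(i) = k \iff rk \equiv i \pmod{p^s}$.

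With this in hand, I would compute the Lee decomposition of $r \cdot c$ coordinate by coordinate. For each index $j \in \set{0, \dots, M}$, a position $k$ contributes to $\piL_j(r \cdot c)$ precisely when $(r \cdot c)_k \in I^{\mathsf{L}}_j$, equivalently when $c_k \in r^{-1} I^{\mathsf{L}}_j = I^{\mathsf{L}}_{\sigma_r(j)}$. Summing over the $n$ coordinates gives $\piL_j(r \cdot c) = \piL_{\sigma_r(j)}(c)$ for every $j$, that is $\piL(r \cdot c) = \sigma_r(\piL(c))$, which is the transformation law we need.

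I expect the only delicate point to be the bookkeeping of indices: one must check that the index permutation $\sigma_r$ defined by $rk \equiv i \pmod{p^s}$ agrees, up to the correct inversion, with the permutation of Lee blocks induced by multiplication by $r$, and that reducing products modulo $p^s$ to their Lee representatives in $\set{0, \dots, M}$ is consistent with the convention $\alpha_{p^s - i} = -\alpha_i$. Once this identification is pinned down, the remainder is a direct translation between the bijection $\phi_r$ and the counting function $\enumL{\cdot}(\code)$.
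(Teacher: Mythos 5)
Your proof is correct and takes the approach the paper itself indicates: the paper cites this lemma from the literature without proving it, but the paragraph preceding the statement asserts exactly the transformation law $\piL(r \cdot c) = \sigma_r(\piL(c))$ that you verify, and your fiber-bijection argument via $\phi_r \colon c \mapsto r\cdot c$ (with $\sigma_{r^{-1}} = \sigma_r^{-1}$) is the standard way to pass from that law to the equality of enumerators. You also correctly isolate the one genuine subtlety, which the paper's definition glosses over: the relation $rk \equiv i \pmod{p^s}$ may produce $k > M$, so $\sigma_r$ is only well defined on block representatives via $rk \equiv \pm i \pmod{p^s}$, consistent with the convention $\alpha_{p^s-i} = -\alpha_i$.
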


For any Lee decomposition $\pi \in \decompLq$, we define the set
\begin{align}
    S(\pi) := \set{\sigma_r(\pi) \st r \in \units{p^s},\ \sigma_r(i)=k \text{ with } kr\equiv i \pmod{p^s}}.
\end{align}
Since $\enumL{\pi}(\code) = \enumL{\rho}(\code)$ for every $\rho \in S(\pi)$, an additional constraint arises in the formulation of the LP problem for linear Lee-metric codes.

Let $ B_{p,1,s,n}(n,\LD)$ denote the maximum number of codewords in a linear code of length $n$ and minimum Lee-distance $\LD$ over $\Zps$.
\begin{theorem}[Linear Programming Bound for Lee-metric Codes] 
    For a linear code $\code \subseteq \Zpsn$ of minimum Lee distance $\LD$ it holds that
    \begin{align}
         B_{\Zps}(n,\LD)\le \max \left(\sum_{i=0}^{D-1} \enumL{\wtdec{i}}(\code)\right),
    \end{align}
    where the maximum is taken over all $\enumL{\wtdec{i}}(\code)$ subject to the following conditions: 
    \begin{itemize}
        \item $\enumL{\wtdec{0}}(\code)=1$;
        \item $\enumL{\wtdec i}(\code)\ge 0$ for all $1\le i\le D-1$ and $\enumL{\wtdec{i}}(\code)=0$ for all $i \in I $;
        \item $\enumL{\wtdec i}(\code)=\enumL{\wtdec j}(\code)$ for all $\wtdec j \in S(\wtdec i)$;
        \item and \begin{align}
            \sum_{i=1}^{D-1} \enumL{\wtdec i}(\code) \krawL_{\wtdec j}(\wtdec i)\ge
            \begin{cases}
                 -\binom{n}{\wtdec i_1, \ldots, \wtdec i_M}2^{n-\wtdec i_0-\wtdec i_M }& \text{if } p=2, 
                \\ -\binom{n}{\wtdec i_1, \ldots, \wtdec i_M}2^{n-\wtdec 0_i } & \text{otherwise,}
            \end{cases}
        \end{align}
    \end{itemize}
    where $I=\{i \ \st \  x \in \Zpsn \text{ such that }\pi(x)=\wtdec i \text{ and } \LW(x)<\LD\}$.
\end{theorem}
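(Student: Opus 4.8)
The plan is to specialize the general Linear Programming bound of \Cref{thm:lp_bound} to the Lee metric over $\Zps$, feeding in the MacWilliams-type identities of \Cref{thm:MacWilliams_Lee} together with the unit-multiplication symmetry of \cite[Lemma 1]{astola2014linear} recalled above. The skeleton is the standard LP relaxation: since $\card{\code}=\sum_{i=0}^{D-1}\enumL{\wtdec{i}}(\code)$, the true decomposition enumerators $\enumL{\wtdec{i}}(\code)$ form one feasible point of the stated optimization problem, so maximizing $\sum_{i}\enumL{\wtdec{i}}(\code)$ over all nonnegative real vectors satisfying the listed constraints can only increase the value, giving an upper bound on $\card{\code}$. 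It therefore suffices to verify that the four conditions are genuinely satisfied by the Lee decomposition enumerator of $\code$.

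First I would dispose of the elementary constraints. The decomposition $\wtdec{0}=(n,0,\dots,0)$ forces every coordinate into $I^{\mathsf{L}}_0=\set{0}$, so the only vector with this decomposition is the all-zero word; as $\code$ is linear this gives $\enumL{\wtdec{0}}(\code)=1$, while $\enumL{\wtdec{i}}(\code)\ge 0$ is immediate. For $i\in I$ there is an ambient vector of decomposition $\wtdec{i}$ and Lee weight $<d_{\mathsf{L}}$; since every word of decomposition $\wtdec{i}$ shares the same Lee weight $\LW(\wtdec{i})<d_{\mathsf{L}}$ and $\code$ has minimum Lee distance $d_{\mathsf{L}}$, no \emph{nonzero} codeword can carry such a decomposition, whence $\enumL{\wtdec{i}}(\code)=0$. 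The permutation constraint $\enumL{\wtdec{i}}(\code)=\enumL{\wtdec{j}}(\code)$ for $\wtdec{j}\in S(\wtdec{i})$ follows at once from the cited lemma: multiplication by a unit $r\in\units{p^s}$ is a bijection of $\code$ onto itself sending a codeword of Lee decomposition $\pi$ to one of decomposition $\sigma_r(\pi)$, so it restricts to a bijection between $\PW_{\pi}\cap\code$ and $\PW_{\sigma_r(\pi)}\cap\code$, forcing equality of cardinalities.

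The heart of the argument is the refined Krawtchouk inequality. Applying \Cref{thm:MacWilliams_Lee} with $\rho=\wtdec{j}$ and using that the dual enumerator $\enumL{\wtdec{j}}(\dualcode)$ is a nonnegative integer yields $\sum_{i=0}^{D-1}\krawL_{\wtdec{i}}(\wtdec{j})\,\enumL{\wtdec{i}}(\code)\ge 0$. Isolating the all-zero term (where $\enumL{\wtdec{0}}(\code)=1$) and moving it to the right gives the stated lower bound, once $\krawL_{\wtdec{0}}(\wtdec{j})$ is evaluated. Since $\wtdec{0}$ corresponds to $a=0$, the associated character is trivial, and this coefficient is simply the number of ambient vectors of decomposition $\wtdec{j}$, i.e.\ $\card{\PW_{\wtdec{j}}}$. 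I would count these by first choosing which coordinates land in each block $I^{\mathsf{L}}_k$ — producing a multinomial coefficient $\binom{n}{\wtdec{j}_0,\dots,\wtdec{j}_M}$ — and then selecting a value within each block, contributing a factor $\card{I^{\mathsf{L}}_k}$ per coordinate.

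The key combinatorial observation, which I expect to be the only genuinely new (and hence most delicate) step, is that every block $I^{\mathsf{L}}_k=\set{\pm\alpha_k}$ with $1\le k\le M$ has two elements, \emph{except} the top block $I^{\mathsf{L}}_M$, which collapses to the singleton $\set{p^s/2}$ exactly when $p^s$ is even, that is, when $p=2$. Consequently $\card{\PW_{\wtdec{j}}}=\binom{n}{\wtdec{j}_0,\dots,\wtdec{j}_M}\,2^{n-\wtdec{j}_0}$ for odd $p$, whereas one loses the factors coming from the $\wtdec{j}_M$ coordinates in the singleton block when $p=2$, giving $\binom{n}{\wtdec{j}_0,\dots,\wtdec{j}_M}\,2^{n-\wtdec{j}_0-\wtdec{j}_M}$. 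This parity dichotomy is precisely what produces the two exponents in the right-hand side of the fourth condition, and it is the one place where care is required; the remaining verifications are bookkeeping on top of \Cref{thm:lp_bound}, \Cref{thm:MacWilliams_Lee}, and the unit-symmetry lemma.
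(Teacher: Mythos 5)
Your proposal is correct and follows essentially the same route as the paper: the paper likewise obtains the theorem by specializing \Cref{thm:lp_bound} to $\Zps$, importing the symmetry constraint $\enumL{\wtdec i}(\code)=\enumL{\wtdec j}(\code)$ from the cited unit-multiplication lemma, and deriving the final constraint from nonnegativity of $\enumL{\wtdec{j}}(\dualcode)$ in \Cref{thm:MacWilliams_Lee} together with the evaluation of the \kr coefficient involving the all-zero decomposition, which counts $\card{\PW_{\wtdec{j}}}$ and yields exactly your $p=2$ versus odd-$p$ dichotomy coming from the singleton block $I^{\mathsf{L}}_M$. Your write-up is simply a more detailed version of the paper's brief justification, with no substantive difference in approach.
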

The last constraint is derived from the fact that 
\begin{align}
    \enumL{\wtdec j}(\dualcode) = \sum_{i=0}^{D-1} \enumL{\wtdec i}(\code)\krawL_{\wtdec j} (\wtdec i) \ge 0
\end{align}
and from computing the \kr coefficient of  $\wtdec 0$ with respect to $\wtdec{j}$.\\ 
However, note that this formulation of the LP problem still does not result in an efficient program due to the high number of variables involved.
\section{Conclusion}\label{sec:conclusion}
In this paper, we investigated MacWilliams-type identities for codes over finite chain rings equipped with additive metrics. Our approach is based on studying partitions of the ambient space that refine the corresponding weight partitions while remaining Fourier-reflexive. For the Lee metric, we showed that the partition induced by the action of the full group of linear isometries is already the coarsest Fourier-reflexive refinement of the weight partition. In contrast, for the homogeneous and subfield metric, we identified new symmetrized partitions that are significantly coarser than the orbit partitions induced by subgroups of the linear isometry group, yet remain Fourier-reflexive. These partitions are, moreover, the coarsest symmetrized partitions for which MacWilliams-type identities hold. The presented identities can be used to formulate LP bounds in each of the metrics.

In this work we laid the main focus on the Lee, homogeneous and subfield metric. Even though the MacWilliams-type identities for the Lee partition enumerator allows to derive the weight enumerator of any additive weight over any finite chain ring, for specific weights there are more natural choices on how to partition the code. Future work, therefore, includes the study of other weights, such as the Bachoc weight \cite{bachoc}, Krotov weight \cite{krotov1,krotov2} or the overweight \cite{gassner}.

\appendix
\section{Examples}\label{app:examples}
In this section we provide computational examples of the partition-based Mac-Williams identities in the Lee, homogeneous and subfield metric presented in Corollaries \ref{thm:MacWilliams_Lee}, \ref{thm:MacWilliams_Hom} and \ref{thm:MacWilliams_subfield}, respectively.

\subsection{Lee Weight Partition}\label{app:example_Lee}
We start by giving an example of the MacWilliams-type identities for the Lee weight partition given in Corollary \ref{thm:MacWilliams_Lee}.

\begin{example}
    Consider the code $\code \subseteq (\Zmod{9})^3$ generated by $G = \begin{pmatrix} 3 & 2 & 8 \end{pmatrix}$. That is,
    \begin{align}
        \code  = \big\{(0, 0, 0), &(0, 3, 3), (0, 6, 6), (3, 2, 8), (6, 7, 1), 
        (6, 4, 7),(3, 5, 2),  (3, 8, 5), (6, 1, 4)\big\}.
    \end{align}
    Note that there are five distinct Lee decompositions in $\code$, namely, 
    \begin{gather}
        \pi^{(0)} = (3, 0, 0, 0, 0), \ \pi^{(1)} = (1, 0, 0, 2, 0), \ \pi^{(2)} = (0, 1, 1, 1, 0), \\ \pi^{(3)} = (0, 0, 1,1,1) \ \text{ and } \ \pi^{(4)}=(0, 1, 0,1,1) .
    \end{gather}
    Additionally, we have that $\enumL{\pi^{(0)}}(\code) = 1$ and $\enumL{\pi^{(j)}}(\code) = 2$ for $j = 1,2, 3, 4$.
    The dual $\dualcode$ of $\code$ is generated by the matrix
    \begin{align}
        G^{\perp} = \begin{pmatrix} 1 & 0 & 3 \\ 0 & 1 & 2 \end{pmatrix}.
    \end{align}
    Let $\rho = (2, 0, 0, 1, 0)$ be a Lee decomposition in $(\Zmod{9})^3$. By calculating the elements of $\dualcode$, we observe two codewords in $\dualcode$ of Lee decomposition $\rho$, namely $(3, 0, 0)$ and $(6, 0, 0)$. Hence, $\enumL{\rho}(\dualcode) = 2$.
    
    We now compute the \kr coefficient $\krawL_{\pi}(\rho)$, as stated in \eqref{equ:kraw_Leemetric}, for $\pi = \pi^{(j)}$ with $j = 0, \ldots , 4$. Let us start with $\pi = \pi^{(0)} = (3, 0, 0, 0, 0)$. Notice, since $\pi_i = 0$ for every $i = 1, \ldots, 4$, the only possibility for $t = (t_{0}, \ldots t_{M})$ is given by $t_0 = \rho$ and $t_i = (0, \ldots, 0)$.
    That is, $t_{ij} = 0$ and thus $(\xi^{-ij} + \xi^{ij})^{t_{ij}} = 1$ for every $i > 0$ and every $j$, as well as $\binom{\pi_i}{t_{i0}, \ldots , t_{iM}} = 1$. Hence, for $\pi = \pi^{(1)}$ we have 
    \begin{align}\label{ex:Lee_pi1}
        \prod_{i = 0}^M \binom{\pi_i}{t_{i0}, \ldots, t_{iM}}  \prod_{j = 1}^M\left(\xi^{-ij} + \xi^{ij}\right)^{t_{ij}}
            &=
        \binom{\pi_0}{t_{00}, \ldots , t_{0M}} \left(1 + 1 \right)^{t_{03}} \\
            &= 
        \binom{3}{2, 1}(1+1) \\
            &= 6.
    \end{align}
    Consider now the partition $\pi = \pi^{(1)} = (1, 0, 0, 2, 0)$. There are two compositions $t = (t_{0}, \ldots, t_{M}), s = (s_{0}, \ldots, s_{M}) \in \compL$, given by
    \begin{align}
        &t_0 = (1, 0, 0, 0, 0), \; t_{3} = (1, 0, 0, 1, 0)  \text{ and } t_1 = t_2 = t_4 = (0, \ldots, 0),\\ 
        &s_0 = (0, 0, 0, 1, 0),\; s_{3} = (2, 0, 0, 0, 0) \text{ and } s_1 = s_2 = s_4 = (0, \ldots, 0).
    \end{align}
    Hence, we get
    \begin{align}\label{ex:Lee_pi2}
        \sum_{t \in \compL} \left(\prod_{i = 0}^4 \binom{\pi_i}{t_{i0}, \ldots, t_{i4}}  \prod_{j = 1}^4\left(\xi^{-ij} + \xi^{ij}\right)^{t_{ij}}\right)&=
        \left( \binom{\pi_0}{t_{00}, \ldots, t_{04}} \prod_{j = 1}^4\left(\xi^{0} + \xi^{0}\right)^{t_{0j}} \binom{\pi_3}{t_{30}, \ldots, t_{34}}\prod_{j = 1}^M\left(\xi^{-3j} + \xi^{3j}\right)^{t_{3j}} \right.\\
            &\quad\qquad 
        + \left. \binom{\pi_0}{s_{00}, \ldots, s_{04}} \prod_{j = 1}^4\left(\xi^{0} + \xi^{0}\right)^{s_{0j}} \binom{\pi_3}{s_{30}, \ldots, s_{34}}\prod_{j = 1}^M\left(\xi^{-3j} + \xi^{3j}\right)^{s_{3j}}  \right)\\
            & =
        \left( \binom{1}{1}\binom{2}{1, 1}(\xi^{0} + \xi^{0})^{1} \binom{1}{1}\binom{2}{2}(\xi^{0} + \xi^{0})^{1} \right)\\
            & = 6.
    \end{align}
    For $\pi \in \set{\pi^{(2)}, \pi^{(3)}, \pi^{(4)}}$ the computations are similar and yield in each case
    \begin{align}\label{ex:Lee_pi345}
        \sum_{t \in \compL} \left(\prod_{i = 0}^4 \binom{\pi_i}{t_{i0}, \ldots, t_{i4}}  \prod_{j = 1}^4\left(\xi^{-ij} + \xi^{ij}\right)^{t_{ij}}\right)
             =
        \left((\xi^{0} + \xi^{0}) +  (\xi^{-3} + \xi^{3}) + (\xi^{-3} + \xi^{3}) \right)
        = 0,
    \end{align}
    since $\sum_{i \in \ideal{3}} \xi^{i} = 0$.
    Putting the expressions given in \eqref{ex:Lee_pi1}, \eqref{ex:Lee_pi2} and \eqref{ex:Lee_pi345} in the right-hand side of \eqref{eq:MacWilliams_Lee} yields
    \begin{align}
        \frac{1}{\card{\code}} \sum_{\pi \in \decompLq} \krawL_{\pi}(\rho) \enumL{\rho}(\code) 
            = 
        \frac{1}{9} \left( 6 \cdot 1 + 6 \cdot 2 + 0\cdot 2 + 0\cdot 2 + 0\cdot 2 \right) = 2,
    \end{align}
    which, indeed, corresponds to $\enumL{\pi}(\dualcode) = 2$.
\end{example}

\subsection{Homogeneous Weight-Unit Partition}\label{app:example_Hom}
Let us now consider an example for the MacWilliams-type identities for the homogeneous weight-unit partition stated in Theorem \ref{thm:nonexistence_Macwill_Hom}.
\begin{example}\label{ex:homogeneous}
   Given $q=2$ and $s=4$, consider the code $\code \subset (\Zmod{16})^3$ generated by $G = \begin{pmatrix} 8 & 1 & 8\end{pmatrix}$. Notice, that there are four distinct homogeneous weight-unit decompositions occurring in $\code$, namely,
    \begin{gather}
        \pi^{(0)} = (3, 0, 0, 0), \quad \pi^{(1)} = (2, 0, 0, 1), \quad \pi^{(2)} = ( 2, 0, 1, 0), \quad \pi^{(3)}=(0,1,2,0),
    \end{gather}
    with the corresponding homogeneous weight-unit enumerators given by 
    \begin{align}
        \enumH{\pi^{(0)}}(\code) = 1, \quad  \enumH{\pi^{(1)}}(\code) = 6, \quad \enumH{\pi^{(2)}}(\code) = 1, \quad \enumH{\pi^{(3)}}(\code) = 8.
    \end{align}
    Note that the dual code of $\code$, $\dualcode$, is generated by
    \begin{align}
        G^{\perp} = \begin{pmatrix} 1 & 8 & 0 \\ 0 & 8 & 1 \end{pmatrix}.
    \end{align}
    Let $\rho = (1, 1, 1, 0) \in \mathbb{D}_{3}^{\mathsf{hom}}$ be a homogeneous weight-unit decomposition in $(\Zmod{16})^3$. By calculating the codewords of $\dualcode$, we observe that $\enumH{\rho}(\dualcode) = 16$.
    For every $\pi \in \set{\pi^{(0)}, \pi^{(1)}, \pi^{(2)},\pi^{(3)}}$ let us now compute the \kr coefficient $\krawH_{\pi}(\rho)$ as stated in Theorem \ref{thm:MacWilliams_Hom}.

    Let us start by choosing $\pi = \pi^{(0)}$. Notice  that there is only one composition $\rho$ with respect to $\pi^{(0)}$, i.e.,
    \begin{align}
        \mathrm{Comp}^{\mathsf{hom}}_{\pi^{(0)}}(\rho) = \Big\{(\underbrace{1, 1, 1, 0}_{t_Z}, \underbrace{0, 0, 0, 0}_{t_U}, \underbrace{0, 0, 0, 0}_{t_S}, \underbrace{0, 0, 0, 0}_{t_R})\Big\}.
    \end{align}
    Hence, we obtain
    \begin{align}
        \krawH_{\pi^{(0)}}(\rho) &= \prod_{i  \in\ZUSRset} \binom{\pi_i}{t_{iZ}, \ldots, t_{iR}}
        =
        \binom{\pi_Z}{t_Z}(q^{s-1}(q-1))^{t_{ZU}}\\
        &=
        \binom{3}{1, 1, 1, 0}8 = 48.
    \end{align}
    Now, let $\pi = \pi^{(1)}$.
    Note that the set of compositions $\compH$ of $\rho$ with respect to $\pi$ consists of only three elements, i.e.,
    \begin{align}
        \mathrm{Comp}^{\mathsf{hom}}_{\pi^{(1)}}(\rho) = \Big\{&t = (\underbrace{1, 1, 0, 0}_{t_Z}, \underbrace{0, 0, 0, 0}_{t_U}, \underbrace{0, 0, 0, 0}_{t_S}, \underbrace{0, 0, 1, 0}_{t_R}),\\  &s =(\underbrace{1, 0, 1, 0}_{s_Z}, \underbrace{0, 0, 0, 0}_{s_U}, \underbrace{0, 0, 0, 0}_{s_S}, \underbrace{0, 1, 0, 0}_{s_R})\\ &v =(\underbrace{0, 1, 1, 0}_{v_Z}, \underbrace{0, 0, 0, 0}_{v_U}, \underbrace{0, 0, 0, 0}_{v_S}, \underbrace{ 1, 0,0, 0}_{s_R})\Big\} .
    \end{align}
    Recalling that $q-1=1$ and omitting all the powers of $1$, we obtain 
    \begin{align}
      \krawH_{\pi^{(1)}}(\rho)  
        &= \binom{\pi_Z}{t_Z}\binom{\pi_R}{t_R} \left(q^{s-1}\right)^{t_{ZU}} + \binom{\pi_Z}{v_Z}\binom{\pi_R}{v_R}(q^{s-1})^{v_{ZU}} \\
        &=
        \binom{2}{1, 1,0, 0}\binom{1}{0, 0, 1, 0} 8 + \binom{2}{0, 1, 1, 0}\binom{1}{ 1, 0,  0, 0} 8\\
        &= 32.
    \end{align}
 Moreover, if $\pi = \pi^{(2)}$, we have
    \begin{align}
        \mathrm{Comp}^{\mathsf{hom}}_{\pi^{(2)}}(\rho) = \Big\{&t = (\underbrace{1, 1, 0, 0}_{t_Z}, \underbrace{0, 0, 0, 0}_{t_U}, \underbrace{0, 0, 1, 0}_{t_S}, \underbrace{0, 0, 0, 0}_{t_R}),\\  &s =(\underbrace{1, 0, 1, 0}_{s_Z}, \underbrace{0, 0, 0, 0}_{s_U}, \underbrace{0, 1, 0, 0}_{s_S}, \underbrace{0, 0, 0, 0}_{s_R})\\ &v =(\underbrace{0, 1, 1, 0}_{v_Z}, \underbrace{0, 0, 0, 0}_{v_U}, \underbrace{1, 0, 0, 0}_{v_S}, \underbrace{ 0, 0,0, 0}_{s_R})\Big\} .
    \end{align}
   It follows that
    \begin{align}
      \krawH_{\pi^{(2)}}(\rho)  &= \binom{\pi_Z}{t_Z}\binom{\pi_S}{t_S} \left(q^{s-1}\right)^{t_{ZU}} + \binom{\pi_Z}{s_Z}\binom{\pi_S}{s_S}(-1)^{s_{SU}} +   \binom{\pi_Z}{v_Z}\binom{\pi_S}{v_S}(q^{s-1})^{v_{ZU}} \\
        &=
        \binom{2}{1, 1,0, 0}\binom{1}{0, 0, 1, 0} 8 +\binom{2}{1, 0, 1, 0}\binom{1}{0,1,0,0}(-8)  +\binom{2}{0, 1, 1, 0}\binom{1}{ 1, 0,  0, 0} 8\\
        &= 16.
    \end{align}
    
    Lastly, for $\pi^{(3)}$, we have
    \begin{align}
        \mathrm{Comp}^{\mathsf{hom}}_{\pi^{(2)}}(\rho)
         = \Big\{&t = (\underbrace{0, 0, 0, 0}_{t_Z}, \underbrace{1, 0, 0, 0}_{t_U}, \underbrace{0, 1, 1, 0}_{t_S}, \underbrace{0, 0, 0, 0}_{t_R}),\\ &s =(\underbrace{0, 0, 0, 0}_{s_Z}, \underbrace{0, 1, 0, 0}_{s_U}, \underbrace{1, 0, 1, 0}_{s_S}, \underbrace{0, 0, 0, 0}_{s_R})\\ &v =(\underbrace{0, 0, 0, 0}_{v_Z}, \underbrace{0, 0, 1, 0}_{v_U}, \underbrace{1, 1, 0, 0}_{v_S}, \underbrace{ 0, 0,0, 0}_{s_R})\Big\} ,
    \end{align}
    which yields to
    \begin{align}
        \krawH_{\pi^{(2)}}(\rho) &=  \binom{\pi_U}{t_U}\binom{\pi_S}{t_S} \left(-q^{s-1}\right)^{t_{SU}} + \binom{\pi_U}{s_U}\binom{\pi_S}{s_S}0^{s_{UU}} +    \binom{\pi_U}{v_U}\binom{\pi_S}{v_S}(-1)^{v_{US}}(-q^{s-1})^{v_{ZU}} \\
        &=
        \binom{1}{1, 0, 0, 0} \binom{2}{0, 1,1, 0} (-8) + \binom{1}{0, 0, 1, 0}\binom{2}{ 1, 1,  0, 0} 8\\
        &= 0.
    \end{align}
    Finally, inserting these values in Equation \eqref{eq:MacWilliams_Hom}, yields
    \begin{align}
        \frac{1}{\card{\code}} \sum_{\pi \in \decompLq} \krawL_{\pi}(\rho) \enumL{\rho}(\code) 
        =\frac{1}{16} ( 48 \cdot 1 + 6 \cdot 32 + 1 \cdot 16+0 \cdot 8 ) = 16,
    \end{align} 
    which corresponds with $\enumH{\rho}(\dualcode)$.
\end{example}

\subsection{Subfield-Trace Partition}\label{app:example_Sub}
Lastly, we give an example to show the calculations of the MacWilliams-type identities of Corollary \ref{thm:MacWilliams_subfield}.
\begin{example}
    Given the finite field $\field_{27} = \field_3[\alpha]$ where $\alpha^3 = \alpha + 2$. Using the subfield-trace partition (as stated in \eqref{eq:subfield-trace_Fpr}) we partition $\field_{27}$ into the following four blocks
    \begin{gather}
        \IS_0 = \set{0},\quad
        \IS_1 = \field_{27}^{\times} = \set{1, 2}, \quad
        \IS_2 = \set{\alpha, \alpha + 1, \alpha + 2, 2\alpha, 2\alpha + 1, 2\alpha + 2}, \quad \\ \text{and }\quad
        \IS_3 = \field_{27} \setminus \left( \IS_0 \cup \IS_1 \cup \IS_2 \right)
    \end{gather}
    Consider the code $\code \subset \field_{27}^3$ generated by $G = \begin{pmatrix} 1 & \alpha & 1\end{pmatrix}$. By computing the codewords of $\code$ we observe that they can be partitioned into six distinct subfield-trace decompositions, namely $$(3, 0, 0, 0), (0, 2, 1,0), (0, 0, 2, 1), (0, 0, 1, 2), (0, 1, 0,2) \text{ and }(0, 0, 0, 3).$$ Each of the six subfield-trace decompositions are represented by $1, 2, 6, 4, 2$ and $12$ codewords, respectively.
    The dual $\dualcode$ of $\code$ has a generator matrix of the form
    \begin{align}
        G^{\perp} =
        \begin{pmatrix}
            1 & 0 & 2 \\
            0 & 1 & \alpha
        \end{pmatrix}
        .
    \end{align}
    It is easily checked that the vectors $(1, 0, 2)$ and $(2, 0, 1)$ are the only two codewords of $\dualcode$ of subfield-trace decomposition equal to $\rho = (1, 2, 0, 0)$. Thus, $\enumS{\rho}(\dualcode) = 2$.\\
    We now verify that indeed $\frac{1}{|\code|}\sum_{\pi \in \decompSq(\rho)} \krawS_{\pi}(\rho)\enumS{\pi}(\code) = 2$. We show detailed calculations of the \kr coefficient only for $\pi^{(0)} = (3, 0, 0, 0)$ and $\pi^{(1)} = (0, 2, 1, 0)$. Let us first assume, that $\pi = \pi^{(0)}$. Similar to the examples of the Lee and the homogeneous metric, we observe that
    \begin{align}
        \compS = \{(\underbrace{1, 2, 0, 0}_{\pi_0}, \underbrace{0, \ldots, 0}_{\pi_1}, \underbrace{0, \ldots, 0}_{\pi_2}, \underbrace{0, \ldots, 0}_{\pi_3})\}.
    \end{align}
    That is, for $t \in \compS$, we have $t_{00}= 1, t_{01} = 2$ and $t_{02} = \ldots = t_{33} = 0$. This yields
    \begin{align}
        \krawS_{\pi}(\rho) = \binom{3}{2, 1}(3-1)^2 = 12, \quad \text{and} \quad \krawS_{\pi}(\rho)\enumS{\pi}(\code) = 12\cdot 1 = 12.
    \end{align}
    Consider now $\pi = \pi^{(1)}$. For visual reasons, Figure \ref{fig:example_subfield} shows the set $\compS$ of compositions of $\rho$ with respect to $\pi$.
    \begin{figure}
        \centering
        \begin{tikzpicture}
            \draw (0,1) -- (6,1) -- (6,1.5) -- (0,1.5) -- cycle;
            \draw (2, 1) -- (2, 1.5);
            \draw[very thick] (4, 1) -- (4, 1.5);
            \foreach \i in {0, 1}{
                \node at (1 + 2*\i, 1.25) {\footnotesize $\IS_1$};
            }
            \node at (5, 1.25) {\footnotesize $\IS_2$};
            \draw[<->] (0, 1.6) -- (4, 1.6);
            \node[above] at (2, 1.6) {\footnotesize $\pi_1$};
            \draw[<->] (6, 1.6) -- (4, 1.6);
            \node[above] at (5, 1.6) {\footnotesize $\pi_{2}$};
            \node[right = 0.4] at (6, 1.25) {$\pi$};
            \draw (0,0) -- (6,0) -- (6,0.5) -- (0,0.5) -- cycle;
            \draw (0,-0.1) -- (6,-0.1) -- (6,-0.6) -- (0,-0.6) -- cycle;
            \foreach \i in {1, 2}{
                \draw (2, -1.2 + 0.6*\i) -- (2, -0.7 + 0.6*\i);
                \draw[very thick] (4, -1.2 + 0.6*\i) -- (4, -0.7 + 0.6*\i);
            }
            \node at (1, 0.25) {\footnotesize 0};
            \node at (3, 0.25) {\footnotesize $\IS_1$};
            \node at (5, 0.25) {\footnotesize $\IS_1$};
            \node at (1, -0.35) {\footnotesize $\IS_1$};
            \node at (5, -0.35) {\footnotesize $\IS_1$};
            \node at (3, -0.35) {\footnotesize $0$};
            \draw [decorate,decoration={brace,mirror}] (6.2,-0.7) -- (6.2,0.5) node [black,midway,right=0.2] {\footnotesize $\rho$};
        \end{tikzpicture}
        \caption{Illustration of $\compS$.}
        \label{fig:example_subfield}
    \end{figure}
    For the first option $t \in \compS$ shown in Figure \ref{fig:example_subfield}, we have $t_{01} = t_{11} = t_{21} = 1$ and all other $t_{ij}$'s are equal to $0$. The second option, i.e.,  $s \in \compS$, consists of $s_{11} = 2$ and $s_{20} = 1$ and $s_{ij} = 0$ for all other indices. Hence,
    \begin{gather}
        \krawS_{\pi}(\rho) = \binom{2}{1, 1}\binom{1}{1} (3-1)^{1+1} + \binom{2}{2}\binom{1}{1}(3-1)^2 = 12 \quad  \text{and}\quad \krawS_{\pi}(\rho) \enumS{\pi}(\code) = 24.
    \end{gather}
    The calculations of the remaining \kr coefficients are similar to the calculations of $\krawS_{\pi^{(0)}}(\rho)$ and $\krawS_{\pi^{(1)}}(\rho)$ leading to
    \begin{align}
        \frac{1}{|\code|}\sum_{\pi \in \decompSq(\rho)} \krawS_{\pi}(\rho)\enumS{\pi}(\code) = \frac{1}{27}\left( 12 + 24 + 0  - 12 - 6 + 36 \right) = \frac{1}{27}\cdot 54 = 2.
    \end{align}
\end{example}

\section*{Acknowledgments}
Jessica Bariffi was partly supported by the Federal Ministry of Education and Research of Germany in the program of "Souver\"an. Digital. Vernetzt." Joint project 6G-RIC, project identification number: 16KISK022. She is now funded by the European Union (DiDAX, 101115134). Views and opinions expressed are however those of the author(s) only and do not necessarily reflect those of the European Union or the European Research Council Executive Agency. Neither the European Union nor the granting authority can be held responsible for them.
Giulia Cavicchioni is member of the INdAM Research Group GNSAGA. She  conducted the majority of this research at the University of Trento, Trento, Italy, and is currently affiliated with the Institute of Communication
and Navigation, German Aerospace Center, Oberpfaffenhofen, Germany.
Violetta Weger is supported by the European Union's Horizon 2020 research and innovation programme under the Marie Sk\l{}odowska-Curie grant agreement no. 899987.

\bibliographystyle{elsarticle-num}
\bibliography{biblio}

\end{document}